\def\arxiv{1}
\documentclass[onefignum,onetabnum]{siamart220329}
\usepackage[sort&compress,numbers]{natbib}
\usepackage{mathtools}
\usepackage{amsmath,amssymb}
\usepackage[normalem]{ulem}
\usepackage{footnotebackref}
\ifdefined\arxiv
\usepackage[margin=.8in]{geometry}
\else
\fi

\usepackage{xcolor}
\usepackage[ruled,linesnumbered,noend,algo2e]{algorithm2e}

\newcommand{\dist}{\mbox{\rm dist}}
\newcommand{\prox}{\mbox{\rm prox}}

\newcommand{\g}{\nabla f(x)}

\newcommand{\norm}[1]{{\left\|{#1}\right\|}}
\newcommand{\R}{\mathbb{R}}

\newcommand{\I}{\text{Id}}
\newcommand{\bxt}{\bar x_t}

\renewcommand{\H}{\mathcal{H}}
\newcommand{\inprod}[2]{\langle#1,\,#2\rangle}
\DeclareMathOperator*{\argmin}{\arg\,min}

\def\citet{\cite}
\def\citep{\cite}
\newcommand{\cS}{{\cal S}}
\newcommand{\cK}{{\cal K}}

\newsiamremark{remark}{Remark}

\newcommand{\TheTitle}{Revisiting Superlinear Convergence of Proximal
Newton-Like Methods to Degenerate Solutions}

\title{{\TheTitle}\thanks{Version of \today.}}

\date{}
\author{Ching-pei Lee\footnotemark[1]\thanks{Institute of Statistical
	Mathematics, and Graduate University for Advanced Studies,
	SOKENDAI.
	Email:
	\url{leechingpei@gmail.com}}
\and
Stephen~J. Wright\thanks{University of Wisconsin-Madison. Email:
	\url{swright@cs.wisc.edu}}}
\begin{document}

\maketitle
\begin{abstract}
We describe inexact proximal Newton-like methods for solving
degenerate regularized optimization problems and for the broader problem of
 finding a zero of a generalized equation that is the sum of a
continuous map and a maximal monotone operator.
Superlinear convergence for both the distance to the solution
set and a certain measure of first-order optimality can be achieved
under a H\"olderian error
bound condition, including for problems in which the continuous map is
nonmonotone, with Jacobian singular at the solution and not Lipschitz.
\ifdefined\arxiv
Superlinear convergence is attainable even when
the Jacobian is merely uniformly continuous, relaxing the standard
Lipschitz assumption to its theoretical limit.
\fi
For convex regularized optimization problems,
we introduce a novel globalization strategy that ensures strict
	objective decrease and avoids the Maratos effect, attaining local
	$Q$-superlinear convergence 
    without prior knowledge of problem parameters.
Unit step size acceptance in our line search strategy does not rely on
continuity or even existence of the Hessian of the smooth term in the objective,  making
the framework compatible with other potential candidates for
superlinearly convergent updates.
\end{abstract}
\begin{keywords}
proximal-Newton methods, regularized optimization, degenerate problems,
superlinear convergence, Maratos effect, error bound
\end{keywords}

\begin{MSCcodes}
90C53, 90C30
\end{MSCcodes}

\section{Introduction}

Consider the following generalized equation:
\begin{equation}
	\text{Find} \, x \in \H \quad \text{such that} \quad 0 \in (A+B)(x),
	\label{eq:fmono}
\end{equation}
where $\H$ is a real Hilbert space with an inner product
$\inprod{\cdot}{\cdot}$ and induced norm $\norm{\cdot}$, 
$A:\H \to \H$ is continuously differentiable,
$B: \H
\rightrightarrows 2^{\H}$ is a maximal-monotone set-valued operator,
the solution set
$\cS$ is nonempty (and closed), and $A$ is locally $L$-Lipschitz continuous in a
neighborhood of $\cS$.
A point $x$ solves \cref{eq:fmono} if and only if the forward-backward step $R(x)$ defined by
\begin{equation}
	R(x) \coloneqq x - \left( \I + B \right)^{-1}\left( \I - A \right)\left( x \right)
\label{eq:rmono}
\end{equation}
is zero, where $\I$ is the identity operator for $\H$.
We define the norm of the forward-backward step as
\begin{equation}
r(x) \coloneqq \norm{R(x)}.
\label{eq:rdefmono}
\end{equation}
Denoting the solution set of \cref{eq:fmono} by $\cS \coloneqq \{ x \in \H \, | \, R(x) =0 \}$, we further assume that the following order-$q$ H\"olderian error bound condition 
holds locally to $\cS$:
\begin{equation}
	\dist (x, \cS) = \dist\big( x, ( A + B)^{-1} (0) \big) \leq
	\kappa r(x)^q, \quad \forall x \in \left\{ x \mid r(x) \leq \epsilon \right\},
\label{eq:qmetricmono}
\end{equation}
for some $\kappa$, $\epsilon > 0$ and $q \in (0,1]$, where $\dist(x,\cS)$ is the distance between the point $x$ and the set $\cS$, measured by the endowed norm on $\H$.
Without assuming knowledge of the values of $\kappa, \epsilon$, or $q$, we propose algorithms that attain fast local rates for a certain range of values of the exponent $q$.
A well-known sufficient condition for \cref{eq:qmetricmono} is the H\"olderian metric subregularity condition of the same order $q$.

Some of our results assume also that the Jacobian $\nabla A$ is
$p$-H\"older continuous in a neighborhood $U$ of $\cS$ for some
\ifdefined\arxiv
$p \in
(0,1]$, while others only assume uniform continuity.
\else
$p \in
(0,1]$.
\fi
We say that $\nabla A$ is $p$-H\"older continuous in $U$ if, for some $\zeta \geq 0$, we have
\begin{equation} \label{eq:DAholder}
\norm{\nabla A(x) - \nabla A(y)} \leq \zeta \norm{x - y}^p,\quad
	\forall x,y\in U \supset \cS.
\end{equation}
For simplicity, we often assume that $U$ is the same as the neighborhood in which \cref{eq:qmetricmono} holds, that is, $U = \{x \mid r(x) \leq \epsilon \}$.
Through simple calculus, we then have
\begin{align}
\nonumber
	\norm{A(x) - A(y) - \nabla A (y) (x - y)} & \leq \frac{\zeta}{1+p}
	\norm{x - y}^{1+p} \\
 & = O(\norm{x-y}^{1+p}),\quad \forall x,y\in U.
\label{eq:smoothpmono}
\end{align}
$\nabla A$ is Lipschitz continuous when it is H\"older continuous with $p=1$.

In this work, we analyze a damped variant of a forward-backward method with Newton-like scaling for \cref{eq:fmono} under conditions of possible degeneracy. 
We account for cases in which the Jacobian of $A$ at any $x^* \in \cS$ could be singular and the solution set $\cS$ may not be compact.
We do not assume that the iterates $\{ x_t \}$ have a limit or an accumulation point.
We will first analyze local convergence under such degeneracy conditions, assuming that the H\"olderian error bound \cref{eq:qmetricmono} and the H\"older continuity condition on the Jacobian \cref{eq:DAholder} are satisfied in a neighborhood of the solution set $\cS$.
For a certain range of values of $p$ and $q$, we prove that both
$r(x_t)$ and $\dist(x_t,\cS)$ converge superlinearly to $0$ and that
the full sequence of iterates $\{ x_t \}$ converges.
\ifdefined\arxiv
We show further
\else
It can further be shown
\fi
that for the most widely considered case of	$q=1$ in \cref{eq:qmetricmono},
	superlinear convergence is still attainable even if the assumption of H\"older continuity of $\nabla A$ is relaxed to local uniform continuity, provided that some algorithm parameters are adapted.

We then discuss the special case of regularized optimization
\begin{equation}
	\min_{x \in \H}\, F(x) \coloneqq f(x) + \Psi(x),
	\label{eq:f}
\end{equation}
where $\Psi: \H \rightarrow [-\infty,\infty]$ is convex, proper, and
closed; and $f: \H \to \R$ is twice continuously differentiable with Lipschitz continuous gradient in an open set containing the domain of $\Psi$.
We can express \cref{eq:f} in the form \cref{eq:fmono} by setting $A(x) \coloneqq \nabla f(x)$ and $B(x) \coloneqq \partial \Psi(x)$.
When specialized to \eqref{eq:f}, the algorithmic framework considered in this work (see \eqref{eq:local} below) is often called {\em proximal-Newton} or {\em sequential quadratic approximation}.
We describe a globalization strategy for \cref{eq:f} that
ensures global convergence and strict decrease of the objective even when conditions \cref{eq:qmetricmono} and \cref{eq:DAholder} do not hold.
When \cref{eq:qmetricmono} {\em is} satisfied and $f$ is convex, the strategy
guarantees eventual acceptance of the unit step size without
requiring knowledge of problem-dependent parameters.
This leads to fast local superlinear convergence for
not only $r(x_t)$ and $\dist(x_t,\cS)$ but also the objective value
$F(x_t)$, provided $\nabla^2 f$ is H\"older
\ifdefined\arxiv
(or uniformly)
\fi
continuous.
Notably, in the scenario $q=p=\rho=1$, our strategy achieves
quadratic convergence, unlike existing backtracking
and trust-region-like approaches.

Our algorithm for \eqref{eq:fmono} is an inexact  forward-backward method with Newton-like scaling,
	for which iteration $t$ has the following form:
\begin{equation}
	\label{eq:local}
	x_{t+1} \approx \left(  H_t + B \right)^{-1}
	\left(H_t - A \right)  (x_t), \quad H_t \coloneqq \left( \mu_t \I + J_t
	\right),\quad
\mu_t \coloneqq c r ( x_t )^\rho,
\end{equation}
where $r(x_t)$ is defined in \cref{eq:rdefmono,eq:rmono}, $c > 0$ and $\rho \geq 0$ are parameters, and $J_t$ is a positive semidefinite but possibly non-Hermitian linear operator satisfying
\begin{equation}
	\norm{J_t - \nabla A(x_t)} = O\big( r( x_t)^\theta \big), \quad \mbox{for some $\theta  \geq \rho$}.
\label{eq:Bmono}
\end{equation}
When $A$ is maximal monotone  and differentiable, $\nabla A(x)$ is positive semidefinite for any $x$ \citep[see Lemma~3.5]{XiaLWZ16a}, so we could simply set $J_t= \nabla A(x_t)$ in this case.
But we allow other scenarios too, such as when $A$ is maximal monotone only in some region.
The scheme \cref{eq:local} can be viewed as replacing $L \cdot \I$ in the classical forward-backward splitting scheme (where $L$ is an upper bound on the Lipschitz constant for $A$)
by $H_t = \mu_t \I + J_t$ as a variable-metric variant.
Because the resolvent $(\I+B)^{-1}$ of $B$ is well-defined and single-valued due to the maximal monotonicity of $B$, so is $(H_t + B)^{-1}$, by positive semidefiniteness of $J_t$.

We can also view the scheme \eqref{eq:local}  as a generalization of the Newton method for solving the nonlinear equation
\begin{equation}
	\label{eq:nonlineareq}
	\text{Find} \, x \in \H \quad \text{ such that }\quad A(x) = 0,
\end{equation}
which is a special case of \cref{eq:fmono} with $B \equiv 0$.

We denote the {\em exact} solution for the next-iterate formula \eqref{eq:local} by $\hat{x}_{t+1}$, that is,
\[
	\hat x_{t+1} \coloneqq \left( H_t + B \right)^{-1} \left(H_t - A
	\right)  (x_t),
\]
equivalently,
\[
	\text{find} \;\; \hat x_{t+1} \in \H\quad \text{ such that} \quad 0 \in \left( B + H_t \right) (\hat x_{t+1}) - \left(H_t -
		A  \right) (x_t).
\]
If we define
\begin{equation}
\label{eq:residual}
	\hat R_t(x) \coloneqq
	x - \left( \I+B \right)^{-1} \left(\left(\I - H_t \right) (x) +
	\left(H_t - A \right) (x_t) \right), \quad
 \hat r_t(x) \coloneqq \| \hat R_t(x) \|,
\end{equation}
then we  have $\hat r_t(\hat x_{t+1})=0$.
Using this residual, we consider the following requirement on the inexactness of the next iterate $x_{t+1}$:
\begin{equation}
\label{eq:stopmono}
\hat r_t(x_{t+1})
\leq \nu r(x_t)^{1+\rho},
\end{equation}
for some parameter $\nu \geq 0$, where $\rho$ has the same meaning as in  \cref{eq:local}, where it is used to define $\mu_t$.

\subsection{Related Works}
Most analyses of \cref{eq:fmono} focus on fixed-point algorithms and global convergence properties.
There are fewer works on the variable-metric framework of \cref{eq:local}, despite its practical interest.
Newton and quasi-Newton approaches for the special case  \cref{eq:nonlineareq} (where $B \equiv 0$)  are well-studied for both  local and global properties under various globalization strategies (see, for example, \cite[Chapter~11]{NocW06a}).
To our knowledge, there is no systematic treatment for extensions \eqref{eq:local} of these approaches to the general case \cref{eq:fmono}.

For regularized optimization \cref{eq:f}, there is a substantial literature on algorithms like \cref{eq:local}, sometimes known in this case as proximal-(quasi-)Newton or sequential quadratic approximation.
Following the classical analysis for Newton's method, Lee et
al.~\citet{LeeSS14a} assumed $f$ strongly convex  and  Lipschitz twice
continuously differentiable, and showed that $x_t$ approaches the unique minimizer $x^*$ $Q$-superlinearly and $Q$-quadratically for inexact and exact proximal-Newton methods, respectively, where $\mu_t \equiv 0$ in \cref{eq:local}.
Later, Yue et al. \citet{YueZS19a} showed that for  $f$ convex and
Lipschitz twice  continuously differentiable with a Lipschitz gradient, 
convergence of a variant
of the damped proximal-Newton approach \cref{eq:local} (which they
termed IRPN) is locally $Q$-superlinear for  $\rho \in (0,1)$, and
$Q$-quadratic if $\rho = 1$ and the unit step size is eventually
always accepted in their Armijo line search.
Convergence here is for both  $r(x_t)$ and $\dist(x_t, \cS)$ to $0$ when the Luo-Tseng error-bound condition holds.
(The latter condition corresponds to \cref{eq:qmetricmono} with $q = 1$.)
A disadvantage of the quadratic convergence result in \citet{YueZS19a}, pointed out by Mordukhovich et al.~\citet{MorYZZ20a}, is that the Maratos effect must be avoided by selecting the line search parameters carefully to satisfy certain conditions involving the Lipschitz constants and the coefficient $\kappa$ in \cref{eq:qmetricmono}.
(The Maratos effect occurs when the unit step size might not yield sufficient decrease in the objective, causing unit steps that make good progress toward the solution to be rejected.)
Selection of these parameters can be difficult in practice.

Mordukhovich et al.~\citet{MorYZZ20a} showed that, under the
same assumptions on $f$ as \cite{YueZS19a}, the conditions for superlinear
	convergence can be relaxed to \cref{eq:qmetricmono} with $q >
	1/2$, although a slightly weaker, $R$-superlinear convergence for
	$\dist(x_t,\cS)$ is obtained.
They also established a convergence rate faster than quadratic when $q >
1$, but as we will see in the proof of \cref{lemma:ytmono}, $r(x) =
O(\dist(x,S))$ always holds, so \cref{eq:qmetricmono} with $q > 1$ cannot hold in regimes of interest.\footnote{Technically speaking, they assumed a $q$-metric subregularity condition such that $\dist(x,\cS) \leq \kappa \dist(0, (A+B)(x))^q$ and showed that this condition is equivalent to \cref{eq:qmetricmono} with the same $q$ {\em provided that} $q \in (0,1]$ in their Proposition~2.4.
They then utilized \cref{eq:qmetricmono} to obtain their convergence rates under the assumption of $q$-metric subregularity. For the case of $q > 1$, they leveraged the same proposition and claimed that such an equivalence continues to hold to obtain their rate --- but the proof of their Proposition~2.4 shows that $q$-metric subregularity with $q > 1$  implies \cref{eq:qmetricmono} only with $q = 1$. Therefore, for their proof of the faster-than-quadratic rate to hold, it is necessary to assume that  \cref{eq:qmetricmono} holds with $q > 1$.}%

	To avoid the  Maratos effect in the regime of quadratic convergence, \citet{MorYZZ20a} proposed a hybrid strategy that accepts unit steps when $r(x)$ decreases at a specified linear rate; they do not always check the objective value in \eqref{eq:f}, so it is not guaranteed to be monotonically decreasing.
Doikov and Nesterov~\cite{DoiN21a} studied the global convergence of a damped Newton approach for \eqref{eq:f} that is a special case of \cref{eq:local}.
When both $f$ and $\Psi$ are convex and  $f$ is $H$-Lipschitz twice continuously differentiable, setting $\rho = 0.5$ and $c = \sqrt{H/3}$ in \cref{eq:local} and solving the subproblem exactly (that is, $\hat{r}_t(x_{t+1})=0$ in \eqref{eq:residual})  leads to strictly decreasing objective values and a global convergence rate of $O(t^{-2})$ for the objective value to the optimum.
No special globalization strategies are required for this approach.

More recently, Liu et al.~\citet{LiuPWY24a} extended the algorithm of
\cite{MorYZZ20a} to nonconvex $f$ restricted to the specific form of
$f(x) = h(Ax-b)$ for some matrix $A$, vector $b$ and a
twice-differentiable function $h$ whose Hessian is implicitly assumed to be locally Lipschitz. T
Their approach requires computing the smallest eigenvalue of $\nabla^2 h$.
Following an earlier preprint version of \cite{MorYZZ20a},
they proved $Q$-superlinear convergence for $q > (\sqrt{5}-1)/2 \approx .618$
under the assumption that the iterates have an accumulation point
$x^*$ with $\nabla^2 f(x^*)$ positive semidefinite.  This requirement of local convexity is similar to our assumption that $A$ is maximal monotone locally.
When $q=1$, their algorithm does not provide quadratic convergence as those variants in \cite{YueZS19a,MorYZZ20a}.
Vom Dahl and Kanzow~\cite{DahK24a} proposed a trust-region-like
variant of the method in \cite{LiuPWY24a} and obtained convergence results similar
to those of that paper.

\subsection{Contributions}

We advance the state of the art in several respects.
\begin{enumerate}
\item In the degenerate setting where the Jacobian $\nabla A$ is
	singular at the solutions,We establish that superlinear convergence of
	proximal-Newton methods is significantly more robust to lack of
	smoothness than previously understood.
	Our analysis relaxes the standard assumption of Lipschitz
	continuity of the Jacobian to H\"older continuity of
	any order while retaining superlinear rates.
	\ifdefined\arxiv
	We further show
	\else
	Using our proof techniques, it can further be shown
	\fi
	that superlinear convergence remains attainable even
	when this assumption is relaxed to merely uniform continuity,
	provided that the damping term and the stopping tolerance decay
	sufficiently slowly.
\item We propose a novel general line search strategy for convex
	regularized optimization that ensures strict function decrease at
	every iteration, guarantees global convergence to $\cS$, and
	avoids the Maratos effect for any ``fast direction'' yielding
	superlinear convergence.
	Our strategy is agnostic to problem-dependent parameters
	and maintains acceptance of the unit step size even in difficult
	scenarios where existing backtracking and trust-region
	approaches fail, such as the quadratic convergence regime
	and the non-Lipschitz Hessian setting described above.
	Notably, our analysis of unit step size acceptance does not
	rely on continuity properties or even the existence of the Hessian,
	but instead leverages the interplay between the H\"olderian error
	bound condition and properties of the update direction.
	This feature makes the framework compatible with any direction yielding
	superlinear convergence,
	including potential candidates like proximal semismooth Newton or
	proximal quasi-Newton methods for more difficult scenarios, such as when $f$ is not twice-differentiable.
\item Our line-search framework achieves $Q$-superlinear
	convergence for the objective value $F$ in \eqref{eq:f}.
	We believe this result to be the first of its kind, even for
	nondegenerate instances of \eqref{eq:f} in which $\nabla^2 f$ is
	nonsingular.
\item We extend local convergence rates previously established for
	regularized optimization to the generalized equation
	setting \eqref{eq:fmono} in Hilbert spaces. %
	We thus allow the use of a non-Hermitian Jacobian in \cref{eq:local}.
	Through a refined error analysis, we broaden the range of the error
	bound exponent for $Q$-superlinear convergence of $\dist(x_t,\cS)$
	to $1 \geq q > (\sqrt{33}-1)/8 \approx 0.593$,
	improving upon $q=1$ in
	\cite{YueZS19a} and $1 > q >
	(\sqrt{5}-1)/2 \approx 0.618$ in \cite{LiuPWY24a,DahK24a}.
	We obtain the same range of $q > 1/2$ for $R$-superlinear
	convergence of $\dist(x_t,\cS)$ and $Q$-superlinear convergence
	of $r(x_t)$, matching the range in \cite{MorYZZ20a} but for the
	more general problem \cref{eq:fmono}.
\end{enumerate}

\subsection{Notation}
For bounded nonnegative scalar sequences $\{\sigma_k\}$ and $\{ \tau_k \}$, we say that $\sigma_k = o(\tau_k)$ if for any $\beta>0$ we have $\sigma_k \le \beta \tau_k$ for all $k$ sufficiently large.
We say $\sigma_k = O(\tau_k)$ if this bound holds for {\em some} $\beta>0$ and all $k$ sufficiently large.
We say  $\sigma_k = \Omega(\tau_k)$ if there is some $\beta>0$ such that $\sigma_k \ge \beta \tau_k$ for all $k$ sufficiently large.

\subsection{Organization}
\Cref{sec:local} discusses local convergence rates of \cref{eq:local}, including the ranges of $q$, $p$, and $\rho$ that yield $R$- and $Q$-superlinear convergence, respectively.
In \Cref{sec:global} we propose a new globalization strategy for \cref{eq:f}, analyzing its global convergence as well as local $Q$-superlinear convergence for $r(x_t)$, $\dist(x_t, \cS)$, and $F(x_t)$.
Simplification of our algorithm for the case of smooth optimization is discussed in \Cref{sec:simple}.
\ifdefined\arxiv
The result of superlinear convergence when $\nabla A$ is only uniform
continuity is in \cref{app:unicont}.
\fi

\section{Local Convergence}
\label{sec:local}

We describe local convergence properties of our basic algorithm \cref{eq:local}, \cref{eq:stopmono} in this section. Section~\ref{sec:superlin} shows  convergence of $\{ x_t \}$ to the solution set $\cS$, and of the residual $r(x_t)$ to zero, at superlinear rates, under certain conditions.

\subsection{Superlinear Convergence} \label{sec:superlin}
For the problem \eqref{eq:fmono}, we start by showing that under suitable conditions, the sequence $\{x_{t}\}$ defined by
\cref{eq:local}, \cref{eq:stopmono} exhibits local superlinear convergence to the solution set $\cS$ (and quadratic convergence, under stronger assumptions) and that  $r(x_t)$ converges to $0$ at the same rate.
We define the notation
\begin{equation} \label{eq:def.dr}
    d_t \coloneqq \dist(x_t, \cS), \quad r_t \coloneqq r(x_t),
	\quad p_t \coloneqq x_{t+1} - x_t,
\end{equation} 
and derive estimates of some key quantities in terms of these values.
\cref{lemma:ptmono} is a generalization to the setting of \cref{eq:fmono} with H\"older continuous $\nabla A$ of \cite[Lemma~4]{YueZS19a} and \cite[Lemma~4.1]{MorYZZ20a}, which apply to \cref{eq:f} with Lipchitz continuous $\nabla^2 f$. 
\begin{lemma}
Fix an iterate $x_t$ and consider the update scheme \cref{eq:local}, \cref{eq:stopmono}
for \cref{eq:fmono} with $\nu \geq 0$, $\rho
\geq 0$, $A$ single-valued and continuously
differentiable, $B$ maximal monotone, and $\cS \neq \emptyset$.
Assume that in a neighborhood containing both $x_t$ and $x_{t+1}$, $A$ is $L$-Lipschitz continuous for some $L \geq 0$ and $\nabla A$ is $p$-H\"older continuous for some $p \in (0,1]$.
Then we obtain
\begin{equation}
\norm{p_t} = \norm{x_{t+1} - x_t} \leq  O (d_t) +  O\big( \mu_t^{-1} d_t^{1+p} \big) +  O\big(\mu_t^{-1} r_t^{1 + \rho}\big) + O\big(  r_t^{1 + \rho}\big).
\label{eq:qmono}
\end{equation}
\label{lemma:ptmono}
\end{lemma}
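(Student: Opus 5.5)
Fix $\bar x \in \cS$ with $\norm{x_t - \bar x} = d_t$; if $\cS$ is closed but the projection is not attained, take $\bar x$ with $\norm{x_t-\bar x}\le 2d_t$, which changes only constants. Since $\norm{p_t} \le \norm{x_{t+1} - \hat x_{t+1}} + \norm{\hat x_{t+1} - x_t}$, I would bound the two pieces separately. The first piece comes from the inexactness \cref{eq:stopmono}. The second comes from comparing $\hat x_{t+1}$ with $\bar x$ via monotonicity of $B$.

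\textbf{Step 1: the exact step.} The defining inclusions are $-A(\bar x) \in B(\bar x)$ and $H_t(x_t - \hat x_{t+1}) - A(x_t) \in B(\hat x_{t+1})$. Monotonicity of $B$ gives
\[
\inprod{H_t(x_t - \hat x_{t+1}) - A(x_t) + A(\bar x)}{\hat x_{t+1} - \bar x} \ge 0 .
\]
Write $x_t - \hat x_{t+1} = (x_t - \bar x) - (\hat x_{t+1}-\bar x)$ and set $w \coloneqq \hat x_{t+1}-\bar x$. Then
\[
\inprod{H_t w}{w} \le \inprod{H_t(x_t-\bar x) - A(x_t) + A(\bar x)}{w}.
\]
Since $J_t \succeq 0$, the left side is at least $\mu_t\norm{w}^2$. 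Now add and subtract $\nabla A(x_t)(x_t-\bar x)$. Then the right-hand vector equals
\[
\mu_t(x_t-\bar x) + (J_t - \nabla A(x_t))(x_t-\bar x) + \big[\nabla A(x_t)(x_t-\bar x) - A(x_t)+A(\bar x)\big].
\]
Its norm is at most $\mu_t d_t + O(r_t^\theta) d_t + O(d_t^{1+p})$, using \cref{eq:Bmono} and \cref{eq:smoothpmono}. This gives
\[
\norm{w} \le d_t + O(\mu_t^{-1} r_t^\theta d_t) + O(\mu_t^{-1} d_t^{1+p}).
\]
Because $\theta \ge \rho$ and $\mu_t = c r_t^\rho$, the middle term is $O(d_t)$. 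Hence $\norm{\hat x_{t+1}-x_t} \le d_t + \norm{w} = O(d_t) + O(\mu_t^{-1}d_t^{1+p})$. If one prefers not to use \cref{eq:Bmono}, the term $\norm{J_t}d_t$ could instead be bounded by $L d_t$, but that route yields $O(\mu_t^{-1} d_t)$, which is too weak. So \cref{eq:Bmono} is used here.

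\textbf{Step 2: the inexactness.} This is the main obstacle: I must convert $\hat r_t(x_{t+1}) \le \nu r_t^{1+\rho}$ into a bound on $\norm{x_{t+1}-\hat x_{t+1}}$. Set $y \coloneqq x_{t+1}$ and $z \coloneqq (\I+B)^{-1}\big((\I-H_t)y + (H_t-A)(x_t)\big)$, so that $\hat R_t(y) = y - z$. Then
\[
y - z - H_t(y - x_t) - A(x_t) \in B(z) .
\]
Also $\hat x_{t+1}$ satisfies $-H_t(\hat x_{t+1}-x_t) - A(x_t)\in B(\hat x_{t+1})$. Monotonicity of $B$ applied to $z$ and $\hat x_{t+1}$ gives
\[
\inprod{\hat R_t(y) - H_t(y-\hat x_{t+1})}{z-\hat x_{t+1}} \ge 0 .
\]
Substitute $z = y - \hat R_t(y)$ and set $e \coloneqq y-\hat x_{t+1}$. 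The inequality becomes
\[
\inprod{H_t e}{e} \le \inprod{\hat R_t(y)}{e} + \inprod{H_t e}{\hat R_t(y)} - \norm{\hat R_t(y)}^2 .
\]
Using $\inprod{H_te}{e}\ge \mu_t\norm{e}^2$ and $\norm{H_t} \le \mu_t + \norm{J_t} = O(1)$ gives $\mu_t\norm{e}^2 \le O(1)\,\hat r_t(y)\norm{e}$. Here $\norm{J_t}$ is bounded by $L$ plus $O(r_t^\theta)$, and $r_t$ is bounded on the neighborhood. Therefore $\norm{e} = O(\mu_t^{-1}\hat r_t(y)) = O(\mu_t^{-1} r_t^{1+\rho})$.

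\textbf{Conclusion.} Combining the two steps gives \cref{eq:qmono}. The extra term $O(r_t^{1+\rho})$ is harmless and appears if one instead bounds $\norm{p_t}$ via $\norm{y - z}$ plus $\norm{z-\hat x_{t+1}}$.
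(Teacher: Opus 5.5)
Your proof is correct, but it is organized differently from the paper's. The paper never passes through the exact step $\hat x_{t+1}$. It writes the inexact iterate as the exact solution of a perturbed inclusion, namely \eqref{eq:ximono} with $\xi_t = \hat R_t(x_{t+1})$. It then applies the $\mu_t$-strong monotonicity of $H_t + B$ once, between the points $x_{t+1}-\xi_t$ and $\bxt$, to obtain \eqref{eq:lower}. In that inequality the perturbation $(H_t - \I)\xi_t$ produces the $O(\mu_t^{-1} r_t^{1+\rho})$ term. The final triangle inequality $\norm{p_t}\le \norm{\bxt - x_{t+1}+\xi_t} + d_t + \norm{\xi_t}$ is where the $O(r_t^{1+\rho})$ term comes from.

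You instead use monotonicity twice, via the split $\norm{p_t}\le\norm{x_{t+1}-\hat x_{t+1}}+\norm{\hat x_{t+1}-x_t}$. One piece is an exact-step estimate against $\bar x$; the other is the stability estimate $\norm{x_{t+1}-\hat x_{t+1}}\le \mu_t^{-1}(1+\norm{H_t})\,\hat r_t(x_{t+1})$.

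The paper's route is more compact: one inequality and no reference to $\hat x_{t+1}$. Yours separates the Newton-step error from the inexactness error, and the stability bound does not involve $\cS$ at all. It also gives a slightly sharper estimate without the $O(r_t^{1+\rho})$ term, though that term is harmless since it is dominated by $O(\mu_t^{-1}r_t^{1+\rho})$ when $\mu_t=O(1)$. Both routes rely on the same implicit requirements: $\bar x$ lies in the H\"older neighborhood, and $r_t$ is bounded so that $r_t^\theta=O(\mu_t)$ and $\norm{H_t}=O(1)$.

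Your choice of $\bar x$ with $\norm{x_t-\bar x}\le 2d_t$ (and $\bar x=x_t$ when $d_t=0$) is also a genuine refinement. Since $A$ is not assumed monotone, $\cS$ need not be convex. In an infinite-dimensional $\H$, a nonempty closed nonconvex set need not admit a nearest point, so the existence claim in the paper's footnote is not automatic.
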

\begin{proof}
Let $\bxt \in P_{\cS}(x_t)$, where $P_{\cS}$ denotes
projection onto the solution set $\cS$ using the endowed norm, so that
$d_t = \norm{x_t - \bxt}$.\footnote{The solution set $\cS$
	might be nonconvex and thus the projection could be non-unique.
	However, according to our assumption that $\cS \neq \emptyset$,
	there must exist at least one such $\bxt \in P_{\cS}(x_t)$. 
    Regardless, $d_t$ is uniquely defined.}
Denoting by $\bar{U}$ the neighborhood 
assumed in the lemma, we have from the local Lipschitz continuity of $A$ that
\begin{equation}
	\norm{\nabla A(x)} \leq L, \quad \forall x \in \bar{U}.
\label{eq:uppermono}
\end{equation}
Defining
\begin{equation}
\xi_t \coloneqq \hat R_t\left(x_{t+1}\right),
\label{eq:rtmono}
\end{equation}
we have  after rearranging \cref{eq:residual} that 
\begin{align}
	\nonumber
	\left(H_t- A \right) (x_t) - \left( H_t - \I \right) (x_{t+1}) & \in
	(\I+B) \left( x_{t+1} - \xi_t \right)\\
	\quad \Rightarrow \quad
	\xi_t - H_t (\xi_t) & \in \left(A - H_t\right) (x_t) + \left(H_t + B\right) \left(
	x_{t+1}  - \xi_t \right),
	\label{eq:ximono}
\end{align}
while the condition \cref{eq:stopmono} implies that
\begin{equation}
\quad \norm{\xi_t} \leq \nu r_t^{1+\rho}.
\label{eq:xi2mono}
\end{equation}
Because $\bxt \in \cS$, we have from the optimality condition of
\cref{eq:fmono} that
\begin{align}
\nonumber
-A (\bxt) & \in B( \bxt) \\
\; \Rightarrow \;
H_t \left( \bxt - x_t \right) + A(x_t) - A(\bxt) & \in \left(A -
H_t\right) (x_t) + \left(H_t +
B\right)\left( \bxt \right).
\label{eq:partialmono}
\end{align}
Since $J_t$ is positive semidefinite and $B$ is monotone,
 we have from the
definition of $H_t$ in \cref{eq:local} that
\[
\inprod{v-u}{H_t(v-u)} + \inprod{v-u}{z-w} \geq \mu_t
\norm{u-v}^2, \quad \mbox{for any $(u,w)$ and $(v,z)$ in $\mbox{graph}(B)$.}
\]
We thus obtain from \cref{eq:ximono} and \cref{eq:partialmono} that
\begin{equation}
\label{eq:lower}
\begin{aligned}
&~\inprod{ H_t \left( \bxt - x_t \right) + A\left(x_t\right) -
A\left(\bxt\right) - \xi_t + H_t (\xi_t)}{\bxt - x_{t+1} + \xi_t} \\
\geq&~ \mu_t \norm{\bxt - x_{t+1} + \xi_t}^2,
\end{aligned}
\end{equation}
which together with \cref{eq:xi2mono,eq:local,eq:Bmono} implies that
\begin{eqnarray}
\nonumber
&& \norm{\bxt - x_{t+1} + \xi_t} \\
\nonumber
&\stackrel{\cref{eq:local},\cref{eq:Bmono}}{\leq} &
\mu_t^{-1} \big(\norm{(\mu_t + r_t^\theta) \left( \bxt - x_t \right) + \nabla A\left( x_t \right) \left(\bxt -
	x_t\right) + A\left(x_t \right) - A\left( \bxt
\right) }  \\
\nonumber
&& \quad\quad\quad\quad + (1 + \norm{H_t}) \norm{\xi_t} \big) \\
&\stackrel{\cref{eq:smoothpmono},\cref{eq:uppermono},\cref{eq:xi2mono}}{\leq}&
\mu_t^{-1} \big(O\left(\mu_t d_t\right)  + O\big( d_t^{1+p} \big)
 + \nu \left( 1 + L + O\left(\mu_t\right)
\right)r_t^{1+\rho}\big),
\label{eq:firsttermmono}
\end{eqnarray}
where in the final inequality, we used the fact that $\theta \ge \rho$
implies $r_t^{\theta} = O(r_t^\rho) = O(\mu_t)$ and set $y = x_t, x =
\bar x_t$ in applying \cref{eq:smoothpmono}.
We also used the fact $\norm{H_t} = O(1)$ derived from the below with $\theta \geq \rho > 0$:
    \begin{equation*}
    \norm{H_t} \stackrel{\cref{eq:local},\cref{eq:Bmono}}{=} \norm{\mu_t I + \nabla A(x_t) + O(r_t^{\theta})} \stackrel{\cref{eq:local}}{\leq} O(r_t^\rho) + \norm{\nabla A(x_t)} \stackrel{\cref{eq:uppermono}}{\leq} O(r_t^{\rho}) + L = O(1).
    \end{equation*}
Finally, using the triangle inequality, we can bound $\norm{x_{t+1} - x_t}$ by
\begin{eqnarray*}
&&\norm{x_{t+1} - x_t} \\
&\leq& \norm{\bxt - x_{t+1} + \xi_t} + \norm{x_t -
\bxt} + \norm{\xi_t}\\
&\stackrel{\eqref{eq:firsttermmono},\cref{eq:xi2mono}}{\leq}& \mu_t^{-1} \left(
O\left( \mu_t d_t  \right)+O\big( d_t^{1+p} \big) +  O\big(1+ \mu_t \big) O\big(r_t^{1+\rho}\big)\right) + d_t + O\big(
r_t^{1+\rho}\big),
\end{eqnarray*}
verifying \cref{eq:qmono}.
\end{proof}

\begin{lemma}
Suppose the assumptions of  \cref{lemma:ptmono} hold.
Then for $x_{t+1}$ satisfying \cref{eq:stopmono}, we have
\begin{equation}
r_{t+1} = O\big(d_t^{1+\rho} \big) +
O\big( d_t^{1+p} \big) + O\big( \big(r_t^{-\rho}
d_t^{1+p}\big)^{1+p} \big).
\label{eq:gtmono}
\end{equation}
\label{lemma:ytmono}
\end{lemma}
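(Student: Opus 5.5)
The plan is to bound $r_{t+1}=\norm{R(x_{t+1})}$ by comparing the true forward-backward residual $R(x_{t+1})$ with the subproblem residual $\hat R_t(x_{t+1})=\xi_t$. The inexactness condition \cref{eq:stopmono} controls $\xi_t$. The difference between the two residuals will be controlled by nonexpansiveness of the resolvent $(\I+B)^{-1}$, which is firmly nonexpansive since $B$ is maximal monotone. After that, everything is expressed in terms of $d_t$ and $r_t$ using \cref{lemma:ptmono}.

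\emph{Step 1: comparing the residuals.} By \cref{eq:rmono} and \cref{eq:residual}, both residuals are $x_{t+1}$ minus the resolvent applied to some argument. Nonexpansiveness of the resolvent then gives
\[
\norm{R(x_{t+1})-\hat R_t(x_{t+1})}\le \norm{(\I-A)(x_{t+1})-(\I-H_t)(x_{t+1})-(H_t-A)(x_t)}
=\norm{A(x_t)-A(x_{t+1})+H_t p_t}.
\]
Writing $H_t=\mu_t\I+\nabla A(x_t)+(J_t-\nabla A(x_t))$ and using \cref{eq:smoothpmono}, \cref{eq:Bmono} and $\theta\ge\rho$ (so that $r_t^\theta=O(r_t^\rho)$ for bounded $r_t$), this is at most $O(\norm{p_t}^{1+p})+O(r_t^\rho)\norm{p_t}$. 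Adding $\norm{\xi_t}\le\nu r_t^{1+\rho}$ from \cref{eq:xi2mono}, I obtain
\[
r_{t+1}\le O(\norm{p_t}^{1+p})+O(r_t^\rho\norm{p_t})+O(r_t^{1+\rho}).
\]

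\emph{Step 2: showing $r_t=O(d_t)$.} Take $\bxt\in P_{\cS}(x_t)$, so that $R(\bxt)=0$. Nonexpansiveness of the resolvent and $L$-Lipschitz continuity of $A$ give
\[
r_t=\norm{R(x_t)-R(\bxt)}\le \norm{x_t-\bxt}+\norm{(\I-A)(x_t)-(\I-A)(\bxt)}\le(2+L)d_t .
\]
This is the fact later cited as showing that \cref{eq:qmetricmono} cannot hold with $q>1$. The only care needed here is that $\bxt$ lies in the region where $A$ is $L$-Lipschitz; this holds in the local setting assumed.

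\emph{Step 3: substituting.} With $\mu_t=c r_t^\rho$, \cref{lemma:ptmono} gives
\[
\norm{p_t}\le O(d_t)+O(r_t^{-\rho}d_t^{1+p})+O(r_t)+O(r_t^{1+\rho}),
\]
since $\mu_t^{-1}r_t^{1+\rho}=O(r_t)$. By Step 2 this reduces to $\norm{p_t}=O(d_t)+O(r_t^{-\rho}d_t^{1+p})$. The three terms from Step 1 then become:
\begin{itemize}
\item $r_t^\rho\norm{p_t}=O(r_t^\rho d_t)+O(d_t^{1+p})=O(d_t^{1+\rho})+O(d_t^{1+p})$;
\item $\norm{p_t}^{1+p}=O(d_t^{1+p})+O\big((r_t^{-\rho}d_t^{1+p})^{1+p}\big)$, using $(a+b)^{1+p}\le 2^{p}(a^{1+p}+b^{1+p})$;
\item $r_t^{1+\rho}=O(d_t^{1+\rho})$.
\end{itemize}
Summing these yields \cref{eq:gtmono}.

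I expect the only delicate point to be the bookkeeping. Specifically, the term $\mu_t^{-1}r_t^{1+\rho}$ in \cref{eq:qmono} must collapse to $O(d_t)$ via Step 2 rather than blowing up. Everything else is routine once the resolvent comparison in Step 1 is set up correctly.
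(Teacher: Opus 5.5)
Your proof is correct and follows essentially the same route as the paper. You use the triangle inequality through $\hat R_t(x_{t+1})$, then nonexpansiveness of the resolvent to reduce to $\norm{A(x_t)-A(x_{t+1})-H_t(x_t-x_{t+1})}$, then the bound $r_t\le(L+2)d_t$ via a projection $\bxt$, and finally substitution of \cref{lemma:ptmono}. The only cosmetic difference is the order of simplification: you first collapse the bound on $\norm{p_t}$ to $O(d_t)+O(r_t^{-\rho}d_t^{1+p})$ and then substitute. The paper instead substitutes all four terms of \cref{eq:qmono} and simplifies afterwards.
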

\begin{proof}
We have
\begin{equation}
\label{eq:trimono}	
\begin{aligned}
r_{t+1}
& \leq \| R \left( x_{t+1} \right)
- \hat R_t \left( x_{t+1} \right) \| + \| \hat R_t \left( x_{t+1}\right) \| \\
&= \norm{(\I+B)^{-1}(\I-A) (x_{t+1}) - (\I+B)^{-1} 
	\left( \left( H_t - A \right) (x_t) - \left(H_t - \I \right)
	(x_{t+1})\right)}\\
&\qquad + \hat{r}_t(x_{t+1}).
\end{aligned}
\end{equation}
From the nonexpansiveness of the resolvent of $B$, we can bound the
first term on the right-hand side of \cref{eq:trimono} by
\begin{equation} 
\label{eq:prox1mono}
\begin{aligned}
&~\norm{(\I+B)^{-1}(\I-A) (x_{t+1}) - (\I+B)^{-1}
	\left( \left( H_t - A \right) (x_t) - \left(H_t - \I \right)
	(x_{t+1})\right)}\\
	\leq &~\norm{ A (x_t) - A  (x_{t+1}) - H_t \left(  x_t -
	x_{t+1}\right)}.
\end{aligned}
\end{equation}
Next, the fact that $\bar x_{t} \in P_{\cS} \left( x_{t}
\right)$ is a solution of \eqref{eq:fmono} indicates that
\[
	(\I+B)^{-1} (\I - A) (\bar x_{t}) = \bar x_{t}.
\]
From the nonexpansiveness of the resolvent of $B$ and the expressions above, we obtain by applying the triangle inequality again that
\begin{equation}
\label{eq:Lipmono}
\begin{aligned}
	r\left(x_{t}\right) &=
	\norm{x_{t} - (\I+B)^{-1}(\I-A) (x_{t})} \\
	&= \norm{\left(x_{t} -
	(\I+B)^{-1}(\I-A) (x_{t})\right) - \left(\bar x_{t}
	-(\I+B)^{-1}(\I-A) (\bar x_{t})\right)}\\
	&\leq \norm{\bar x_{t} - x_{t}} + \norm{\bar x_{t} - x_{t}
	+ A (x_{t}) - A (\bar x_{t})}\\
	&\leq (L + 2) \norm{\bar x_{t} - x_{t}} = (L+2) d_t,
\end{aligned}
\end{equation}
where we used local Lipschitz continuity of $A$ in the last inequality.
We also note from the definition of $\mu_t$ in \cref{eq:local} that
\begin{equation}
\mu_t^{-1} r_t^{1+\rho} = c^{-1} r_t \stackrel{\cref{eq:Lipmono}}{=}
O(d_t).
\label{eq:gmono}
\end{equation}
By substituting \cref{eq:prox1mono} and \cref{eq:stopmono} into \cref{eq:trimono} and using the definition $p_t \coloneqq x_{t+1} - x_t$, we can prove \cref{eq:gtmono} as follows:
\begin{eqnarray}
\nonumber
&&r\left( x_{t+1} \right)\\
\nonumber
&\leq
&\norm{A (x_t) -  A\left( x_{t+1} \right) - H_t \left( x_t -
x_{t+1}\right)}+ \nu r_t^{1+\rho}\\
\nonumber
&\stackrel{\cref{eq:local},\cref{eq:Bmono}}{=}
&\norm{\nabla A(x_t) \left( x_{t+1} - x_t \right) + A (x_t) -  A\left(
x_{t+1} \right) + (\mu_t + O(r_t^\theta)) \left(x_{t+1} - x_t
\right)}\\
\nonumber
&&\qquad + \nu r_t^{1+\rho}\\
\label{eq:rsuper}
&\stackrel{\cref{eq:smoothpmono},\cref{eq:local},\cref{eq:Bmono},\cref{eq:def.dr}}{\leq}
&O\big( \norm{p_t}^{1+p}\big) + O\left(\mu_t\right) \norm{p_t} + \nu
	r_t^{1+\rho}.
\end{eqnarray}
Thus from \Cref{lemma:ptmono}, we obtain
\begin{eqnarray*}
r_{t+1}  &\leq 
&O\Big( d_t^{1+p} + \big( \mu_t^{-1} d_t^{1+p} \big)^{1+p}
	+ \big( \mu_t^{-1} r_t^{1+\rho} \big)^{1+p} + \big(
	r_t^{1+\rho}\big)^{1+p} \Big) \\
\nonumber
	&& \quad  + O(\mu_t d_t)+ O\big( d_t^{1+p}
	\big) + O\big( r_t^{1+\rho}\big) + O\big( \mu_tr_t^{1+\rho} 
	\big)\\
\nonumber
&\stackrel{\cref{eq:Lipmono},\cref{eq:gmono}}{=}
&O\big( d_t^{1+p} \big) + O\Big(
\big( r_t^{-\rho}d_t^{1+p} \big)^{1+p} \Big)+ O\big( d_t^{1+\rho} \big),
\end{eqnarray*}
where in the last equality we used $\rho>0$ to deduce that $(1 + \rho) (1 + p)
\geq  (1+p)$ and $1 + 2\rho \geq 1+ \rho$.
\end{proof}
It follows from \cref{eq:Lipmono} that $q > 1$ in \cref{eq:qmetricmono} is possible only in trivial circumstances.
Our main superlinear convergence is as follows.
\begin{theorem}
Consider the update scheme \cref{eq:local}, \cref{eq:stopmono}
for \cref{eq:fmono} for some $\nu, \rho
\geq 0$, with $A$ single-valued and continuously
differentiable, $B$ maximal monotone, and $\cS \neq \emptyset$.
Assume that \cref{eq:qmetricmono} holds for some $q > 0$ in a neighborhood $V \coloneqq \{x \mid r(x) \leq \epsilon\}$ of $\cS$ for some $\epsilon > 0$, and that $A$ is $L$-Lipschitz continuous for some $L \geq 0$ and $\nabla A$ is $p$-H\"older continuous \eqref{eq:DAholder} for some $p \in (0,1]$ within the same neighborhood.
Suppose that the following inequalities are satisfied:
\begin{equation}
\left\{ 
\begin{aligned}
(1 + \rho) q &> 1,\\
	(1 + p) q &>1,\\
	\left(q + pq - \rho\right)  (1+p) &> 1.
 \end{aligned}
\right.
\label{eq:ineqmono}
\end{equation}
Then if $r_0$ is sufficiently small, 
we have $Q$-superlinear convergence of $\{r_t\}$ and $\{ d_t \}$ according to
\begin{equation}
r_{t+1} = O\left( r_t^{1+s} \right),\quad
d_{t+1} = O\left( d_t^{1+s} \right),
\label{eq:superconvmono}
\end{equation}
where $s$ is the smallest gap between the left- and right-hand sides in \eqref{eq:ineqmono}, that is, 
\begin{equation*}
s \coloneqq \min\big\{ (1 + \rho) q, \, (1 + p) q, \, (q + pq -
	\rho) ( 1+p)  \big\} - 1 > 0.
\end{equation*}
\label{thm:supermono}
\end{theorem}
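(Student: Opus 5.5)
Assuming the iterates remain in the neighborhood $V$, the plan is to turn \cref{lemma:ytmono} into a recursion purely in $r_t$ by means of the error bound \cref{eq:qmetricmono}. Then I would close the argument with an induction that keeps all iterates in $V$.

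\textbf{Step 1 (one-step estimate for $r$).} For $x_t\in V$, \cref{eq:qmetricmono} gives $d_t\le \kappa r_t^q$. Substituting this into \cref{eq:gtmono} yields
\[
r_{t+1}=O\big(r_t^{(1+\rho)q}\big)+O\big(r_t^{(1+p)q}\big)+O\big(r_t^{(q+pq-\rho)(1+p)}\big)=O\big(r_t^{1+s}\big),
\]
using $r_t\le\epsilon$ small so that the smallest exponent dominates. The third term comes from $\big(r_t^{-\rho}d_t^{1+p}\big)^{1+p}\le \big(\kappa^{1+p}r_t^{q(1+p)-\rho}\big)^{1+p}$. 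The conditions \cref{eq:ineqmono} say exactly that every exponent exceeds $1$. This gives $Q$-superlinear convergence of $\{r_t\}$.

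\textbf{Step 2 (staying in the neighborhood).} This is the main obstacle. \cref{lemma:ptmono,lemma:ytmono} require $A$ to be Lipschitz and $\nabla A$ to be Hölder on a region containing both $x_t$ and $x_{t+1}$. To secure this, I would enlarge slightly and assume the regularity holds on a set containing $V$ together with a ball around it; alternatively, I would show $x_{t+1}$ is close to $\cS$. For the latter, \cref{eq:qmono} combined with $\mu_t=cr_t^\rho$, \cref{eq:gmono}, and $d_t\le\kappa r_t^q$ gives
\[
\norm{p_t}=O(r_t^q)+O\big(r_t^{(1+p)q-\rho}\big)+O(r_t),
\]
and all exponents are positive because $(1+p)q>\rho$ follows from the third inequality in \cref{eq:ineqmono}. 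So $\norm{p_t}\to0$ as $r_t\to0$, and $x_{t+1}$ lies within a small distance of $\bar x_t\in\cS$. Continuity of $r$ (since $A$ and the resolvent are continuous) then gives $r_{t+1}\le\epsilon$. I would set this up as an induction: choose $r_0$ so small that the constant $C$ in $r_{t+1}\le Cr_t^{1+s}$ satisfies $Cr_0^s\le 1/2$. Then $r_t$ decreases geometrically and at least superlinearly, and every $x_t$ remains in $V$. Because the steps are summable, $\sum_t\norm{p_t}\lesssim\sum_t r_t^{\min\{q,(1+p)q-\rho,1\}}<\infty$, so all iterates also stay in a bounded region near $x_0$, where the local constants are uniform.

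\textbf{Step 3 (rate for $d_t$).} Combine the bound from Step 1 with \cref{eq:Lipmono}, which gives $r_t\le(L+2)d_t$, and with the error bound:
\[
d_{t+1}\le\kappa r_{t+1}^q=O\big(r_t^{(1+s)q}\big)=O\big(d_t^{(1+s)q}\big).
\]
This closes the argument only if $(1+s)q\ge1+s$, which fails when $q<1$. So I would instead apply \cref{lemma:ytmono} in a sharper form: bound $d_{t+1}\le\kappa r_{t+1}^q$, with $r_{t+1}$ expressed through $d_t$ directly. From \cref{eq:gtmono}, using $r_t\ge$ a lower bound in terms of $d_t$ via the error bound ($r_t\ge(d_t/\kappa)^{1/q}$) in the $r_t^{-\rho}$ term, and combining the exponents, I expect to obtain $d_{t+1}=O(d_t^{1+s'})$ for some exponent. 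I anticipate that reconciling the exponent to exactly $1+s$ will require care. The fallback I would check is that the paper's $s$ is compatible with the claimed rate after this bookkeeping, possibly by invoking the sharper form of \cref{eq:gtmono} in terms of $d_t$ alone.
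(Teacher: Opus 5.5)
Your Steps 1 and 2 are correct and agree with the paper. In Step 2 you are actually more careful than the paper, which asserts $x_{t+1}\in V$ from $r_{t+1}<\tfrac{99}{100}r_t$ even though \cref{lemma:ytmono} presupposes regularity at $x_{t+1}$. Your enlargement makes this non-circular, because the proof of \cref{lemma:ptmono} uses regularity only at $x_t$ and along $[x_t,\bar x_t]$, so $\norm{p_t}$ is controlled before $x_{t+1}$ is located.

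The gap is Step 3. You stop at ``I expect to obtain $d_{t+1}=O(d_t^{1+s'})$'' and never check the exponent, so the $d_t$ half of \cref{eq:superconvmono} is not proved. The idea you name is exactly the paper's, and it closes with no loss. By \cref{eq:qmetricmono}, $r_t^{-\rho}=(r_t^q)^{-\rho/q}\le(d_t/\kappa)^{-\rho/q}$. Substitute only this negative power into \cref{eq:gtmono}, leaving the positive powers of $d_t$ untouched:
\[
r_{t+1}=O\big(d_t^{1+\rho}+d_t^{1+p}+d_t^{(1+p-\rho/q)(1+p)}\big).
\]
Since $x_{t+1}\in V$ by Step 2, we have $d_{t+1}\le\kappa r_{t+1}^q$. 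Together with $(a+b+c)^q\le 3^q(a^q+b^q+c^q)$ this gives
\[
d_{t+1}=O\big(d_t^{(1+\rho)q}+d_t^{(1+p)q}+d_t^{(q+pq-\rho)(1+p)}\big)=O\big(d_t^{1+s}\big),
\]
because $(1+p-\rho/q)(1+p)q=(q+pq-\rho)(1+p)$. The three exponents are exactly the three quantities whose minimum defines $1+s$, so $s'=s$ and no further bookkeeping is needed.

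Your first attempt lost a factor $q$ because Step 1 had already traded the positive powers $d_t^{1+\rho}$ and $d_t^{1+p}$ for powers of $r_t$. The reverse bound $r_t\le(L+2)d_t$ cannot recover what $d_t\le\kappa r_t^q$ gave away.

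The paper avoids this by making the $d_t$-only estimate above the central step. Applying $d_t\le\kappa r_t^q$ to it reproduces your Step 1; this needs $1+p-\rho/q>0$, which follows from the third inequality in \cref{eq:ineqmono}. Raising it to the power $q$ gives the $d_t$ rate. So both rates come from a single bound.
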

\begin{proof}
Suppose for some $t \ge 0$ that $r_t \le \epsilon_1$ for some
$\epsilon_1 \in (0,\epsilon]$ whose value is defined below.
From \cref{eq:qmetricmono} we have
\begin{equation}
	r_t^{-\rho} = \left( r_t^q \right)^{-\frac{\rho}{q}}
	= O\big(d_t^{-\frac{\rho}{q}}\big).
	\label{eq:gdmono}
\end{equation}
By substituting \cref{eq:gdmono} into \cref{eq:gtmono} we then have
\begin{align}
	\nonumber
	r_{t+1}&= O\big(d_t^{1+\rho} \big) + O\big( d_t^{1+p} \big) + O\big(
\big(d_t^{-\frac{\rho}{q}} d_t^{1+p}\big)^{1+p} \big)\\
&= O\Big( d_t^{1+\rho} + d_t^{1+p} + d_t^{\left(1+p -
	\frac{\rho}{q}\right)(1+p)}\Big).
\label{eq:finalformmono}
\end{align}
By applying \cref{eq:qmetricmono} to the right-hand side of \cref{eq:finalformmono}, we obtain the first equation in \cref{eq:superconvmono}, as well as $r_{t+1} = O(\epsilon_1^s r_t)$.
Because $s>0$, we can decrease  $\epsilon_1$ if necessary to ensure that $r_{t+1} < \frac{99}{100} r_t \le \frac{99}{100} \epsilon_1$ for {\em all} $x_t$ with $r_t \le \epsilon_1$, showing that $x_{t+1} \in V$.
We can thus also apply %
\cref{eq:finalformmono} to \cref{eq:qmetricmono} to obtain
\[
d_{t+1} = O\left(r_{t+1}^q\right)
= O\Big( d_t^{(1+p)q} + d_t^{(1+\rho)q} +
d_t^{(q+pq-\rho)(1+p)}\Big)
=O\left( d_t^{1+s}
\right),
\]
proving the case of $Q$-superlinear convergence for $\{d_t\}$.
The proof is completed by noting that if $x_0$ is such that $r_0 \le \epsilon_1$, then $\{ r_t \}$ decreases monotonically to zero.
\end{proof}

If we seek only $R$-superlinear convergence for $\{d_t \}$, while
retaining $Q$-superlinear convergence for $\{r_t\}$, a different range
of parameters is allowed. %
\begin{theorem}
\label{thm:supermono2}
Suppose that the assumptions of \cref{thm:supermono} hold, except that in place of \cref{eq:ineqmono}, the following inequalities are satisfied:
\begin{equation}
\left\{
\begin{aligned}
	(1 + p) q &> 1,\\
	\left(q + pq - \rho\right)  (1+p) &> 1,\\
	\rho + q &> 1,\\
	\rho &>0.
\end{aligned}
\right.
\label{eq:ineqmono2}
\end{equation}
Then we obtain $Q$-superlinear convergence within $V$ for $\{r_t\}$ according to
\begin{equation}
r_{t+1} = O\left( r_t^{1+\bar{s}} \right),\quad
\label{eq:superconvmono2}
\end{equation}
where $\bar{s}$ is the smallest gap between the left- and right-hand sides in \eqref{eq:ineqmono2}, that is,
\begin{equation}
\bar{s} \coloneqq \min\big\{(1 + p) q, \, (q + pq -
	\rho ) ( 1+p), \, \rho + q, \, 1 + \rho \big\} - 1 > 0.
	\label{eq:s2}
\end{equation}
Moreover, we have $R$-superlinear convergence within $V$ for $\{d_t\}$.
\end{theorem}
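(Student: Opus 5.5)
The plan is to re-run the argument of \cref{lemma:ytmono} without the step that converts every term into a power of $d_t$. Instead, each term is expressed directly as a power of $r_t$, using \cref{eq:qmetricmono} only in the form $d_t \le \kappa r_t^q$. The lossy step in \cref{lemma:ytmono} is bounding $\mu_t d_t = c r_t^\rho d_t$ by $O(d_t^{1+\rho})$ via $r_t = O(d_t)$ from \cref{eq:Lipmono}. Turning $d_t^{1+\rho}$ back into $r_t$ then costs the exponent $(1+\rho)q$, and that is exactly the condition $(1+\rho)q>1$ being dropped. Keeping one factor as $r_t^\rho$ gives $\mu_t d_t = O(r_t^{\rho+q})$, which explains the new condition $\rho+q>1$.

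Concretely, suppose $r_t \le \epsilon_1 \le \epsilon$. I start from \cref{eq:rsuper}, $r_{t+1} \le O(\norm{p_t}^{1+p}) + O(\mu_t)\norm{p_t} + \nu r_t^{1+\rho}$, and insert the bound of \cref{lemma:ptmono}. By \cref{eq:gmono}, that bound reads $\norm{p_t} = O(d_t) + O(r_t^{-\rho} d_t^{1+p}) + O(r_t)$. Each term is then estimated with $d_t = O(r_t^q)$ and $r_t \le \epsilon_1 <1$:
\begin{itemize}
\item $\mu_t d_t = O(r_t^{\rho+q})$;
\item $\mu_t\cdot r_t^{-\rho} d_t^{1+p} = O(d_t^{1+p}) = O(r_t^{(1+p)q})$;
\item $\mu_t \cdot r_t = O(r_t^{1+\rho})$;
\item $\nu r_t^{1+\rho}$ is already in the required form;
\item $\norm{p_t}^{1+p}$ produces $d_t^{1+p} = O(r_t^{(1+p)q})$, $(r_t^{-\rho}d_t^{1+p})^{1+p} = O(r_t^{(q+pq-\rho)(1+p)})$, and $r_t^{1+p}$, which is dominated by $r_t^{(1+p)q}$ since $q\le 1$.
\end{itemize}
Collecting the smallest exponent gives $r_{t+1} = O(r_t^{1+\bar s})$, with $\bar s$ as in \cref{eq:s2}. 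Here $\rho>0$ is needed both for the term $1+\rho$ and for the $O(1)$ bound on $\norm{H_t}$ used in \cref{lemma:ptmono}.

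The part I expect to need the most care is the induction that keeps all iterates in $V$, so that the lemmas (which need $x_t,x_{t+1}$ in the Lipschitz/H\"older neighborhood) apply. I will handle it as in \cref{thm:supermono}. Since $\bar s>0$, shrinking $\epsilon_1$ gives $r_{t+1} \le \tfrac{99}{100} r_t$ whenever $r_t\le\epsilon_1$, so $x_{t+1}\in V$ and $\{r_t\}$ decreases monotonically to zero. To justify applying the lemmas to the pair $(x_t,x_{t+1})$, I will also note that $\norm{p_t} = O(r_t^q + r_t^{(1+p)q-\rho} + r_t)$. The middle exponent is positive because $(q+pq-\rho)(1+p)>1$ forces $q+pq-\rho>0$, so $\norm{p_t}$ is small and $x_{t+1}$ stays in the neighborhood where the smoothness constants hold.

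Finally, for $\{d_t\}$ I will use $d_t \le \kappa r_t^q$. The sequence $\sigma_t \coloneqq r_t^q$ satisfies $\sigma_{t+1} = O(\sigma_t^{1+\bar s})$, so it is $Q$-superlinearly convergent to $0$. Since $d_t$ is bounded by a multiple of a $Q$-superlinearly convergent sequence, $\{d_t\}$ converges $R$-superlinearly, which completes the statement.
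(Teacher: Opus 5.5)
Your proposal is correct and follows essentially the same route as the paper: substitute the bound of \cref{lemma:ptmono} into \cref{eq:rsuper}, convert every term to a power of $r_t$ via $d_t \le \kappa r_t^q$ to recover exactly the exponents in \cref{eq:s2}, and obtain $R$-superlinear convergence of $\{d_t\}$ from domination by $\kappa r_t^q$. The differences are cosmetic. You keep $\mu_t$ and $r_t^{-\rho}$ together rather than first passing through $\|p_t\| = O(d_t^{\min\{1,1+p-\rho/q\}})$ as in \cref{eq:pbmono}. You also add a remark on keeping $x_{t+1}$ inside the neighborhood, which the paper leaves implicit.
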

\begin{proof}
As in the proof of \cref{thm:supermono}, suppose that $r_t \le
\epsilon_1 \le \epsilon$ for some $\epsilon_1>0$.
From \cref{eq:qmetricmono} we have $d_t = O(r_t^q)$, so that $r_t^{-1} = O(d_t^{-1/q})$. Using this bound together with \Cref{lemma:ptmono} and \cref{eq:Lipmono}, we obtain
\begin{equation}
p_t \coloneqq x_{t+1} - x_t = O(d_t) + O\Big(d_t^{1+p-\frac{\rho}{q}}\Big) =
O\Big(d_t^{\min\left\{ 1, 1+p-\frac{\rho}{q} \right\}}\Big).
\label{eq:pbmono}
\end{equation}
Substitution of \cref{eq:pbmono} into \eqref{eq:rsuper} 
then leads to
\begin{eqnarray*}
r_{t+1} &\leq& O(d_t^{1+p}) + O\Big(d_t^{(1+p -
	\frac{\rho}{q})(1+p)}\Big) + \mu_t O(d_t) + \mu_t O\Big(d_t^{1+p
	- \frac{\rho}{q}}\Big) + \nu r_t^{1+\rho}\\
&\stackrel{\cref{eq:qmetricmono}}{=}&
O(r_t^{q(1+p)}) + O\Big(r_t^{(q+pq - \rho)(1+p)}\Big) +
O(r_t^{\rho+q})  + O\Big(r_t^{\rho + (q+pq-\rho)}\Big)+
O(r_t^{1+\rho})\\
&\stackrel{\cref{eq:s2}}{=}& O(r_t^{1+\bar{s}}),
\end{eqnarray*}
which is exactly \cref{eq:superconvmono2}, and $\bar{s} > 0$ if and only if \cref{eq:ineqmono2} holds.
Note that by choosing $\epsilon_1$ sufficiently small, we can ensure that $r_{t+1} \le \frac{99}{100} r_t \le \frac{99}{100} \epsilon_1$, so by requiring that $r_0 \le \epsilon_1$, we have that $\{r_t \}$ decreases to zero, and in fact converges superlinearly to zero since $\bar{s}>0$.

To prove $R$-superlinear convergence of $\{ d_t \}$,  we note that by defining $\hat d_t = \kappa r_t^q$ and  combining \cref{eq:superconvmono2} and \cref{eq:qmetricmono}, we have
\[
d_t \leq \hat d_t, \quad \hat d_{t+1} = O(\hat d_t^{1+ \bar{s}}), \quad \forall t \geq 0,
\]
thus showing that $\{d_t \}$ is dominated by a $Q$-superlinearly convergent sequence.
\end{proof}

When the conditions of either \cref{thm:supermono} or \cref{thm:supermono2} hold, we
also obtain strong convergence for the iterates to a solution point.
\begin{theorem}
\label{thm:iterate}
Assume that the conditions of either \cref{thm:supermono} or \cref{thm:supermono2} hold.
Then  $\{x_t\}$ converges strongly to some point $x^*\in \cS$.
\end{theorem}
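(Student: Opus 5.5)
The plan is to show that $\{x_t\}$ is a Cauchy sequence in $\H$ by proving that the step lengths $\norm{p_t}$ are summable. Completeness of the Hilbert space then gives strong convergence to some $x^*$. A closedness argument finally places $x^*$ in $\cS$.

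\textbf{Step 1: bound the steps.} Under either theorem, once $r_0\le\epsilon_1$, all iterates stay in $V$ and $r_t$ decreases $Q$-superlinearly to zero. In particular $r_{t+1}\le \frac{99}{100}r_t$, so $r_t\le (0.99)^t r_0$. We then bound $\norm{p_t}$ through \cref{lemma:ptmono}, as in \cref{eq:pbmono}. Using \cref{eq:gmono} and $d_t=O(r_t^q)$ from \cref{eq:qmetricmono}, the terms $O(d_t)$, $O(\mu_t^{-1}r_t^{1+\rho})=O(r_t)$ and $O(r_t^{1+\rho})$ are all $O(r_t^{q})$, since $q\le 1$ and $r_t\le 1$. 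The remaining term satisfies $\mu_t^{-1}d_t^{1+p}=O(r_t^{-\rho}r_t^{q(1+p)})=O(r_t^{q+pq-\rho})$. Hence
\[
\norm{p_t}=O\big(r_t^{\gamma}\big),\qquad \gamma\coloneqq \min\{q,\,q+pq-\rho\}.
\]

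\textbf{Step 2: check that the exponent is positive.} This is the step needing care. We have $q>0$. In both \cref{eq:ineqmono} and \cref{eq:ineqmono2}, the condition $(q+pq-\rho)(1+p)>1$ forces $q+pq-\rho>1/(1+p)>0$. So $\gamma>0$ in either case. One must also confirm that the Lipschitz and H\"older constants apply, i.e.\ that $x_t,x_{t+1}\in V$. This was already established in the proofs of \cref{thm:supermono} and \cref{thm:supermono2}.

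\textbf{Step 3: sum the series.} Combining Steps 1 and 2 gives $\sum_t\norm{p_t}\le C\sum_t (0.99)^{\gamma t}r_0^\gamma<\infty$. Thus for $m>n$ we have $\norm{x_m-x_n}\le\sum_{t\ge n}\norm{p_t}\to0$, so $\{x_t\}$ is Cauchy and converges strongly to some $x^*$.

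\textbf{Step 4: identify the limit.} The map $R$ is continuous, because $A$ is continuous and the resolvent $(\I+B)^{-1}$ is nonexpansive. Therefore $r(x^*)=\lim r_t=0$, which means $x^*\in\cS$. Alternatively, $d_t\to0$ and $\cS$ is closed, and this gives the same conclusion.
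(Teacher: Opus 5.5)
Your proposal is correct and follows essentially the same route as the paper. You bound $\norm{p_t}$ by a positive power of $r_t$ via \cref{lemma:ptmono} and \cref{eq:qmetricmono}, get summability from the geometric decay of $r_t$, conclude by completeness, and place the limit in $\cS$ using $d_t \to 0$ and closedness; your alternative via continuity of $R$ works equally well. The only cosmetic differences are that you absorb the $O(r_t)$ and $O(r_t^{1+\rho})$ terms using $q \le 1$, and you use the explicit factor $\frac{99}{100}$ from $t=0$ rather than an eventual contraction factor $\eta$.
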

\begin{proof}
\cref{thm:supermono} and \cref{thm:supermono2}  both show that $x_t \in V$ for all $t$ and that $\{r_t\}$ converges superlinearly to zero.
By using \cref{eq:qmetricmono} in \cref{eq:qmono}, we see that $p_t$ is
bounded by $r_t$ as follows:
\[
	\norm{p_t} \leq O(r_t^q) + O \big(r_t^{(1+p)q - \rho} \big) + O(r_t) +
	O \big(r_t^{1+\rho} \big) = O \big(r_t^{\min\{q,1,1+\rho,(1+p)q - \rho} \big).
\]
From the constraints in \cref{eq:ineqmono,eq:ineqmono2}, together with $\rho
\geq 0$ and $q > 0$, we obtain
\[
	\min \big\{1+\rho,(1+p)q - \rho,q,1 \big\} > 0,
\]
so \cref{eq:pbmono,eq:qmetricmono} indicate that
there is $\tau > 0$ such that $\norm{p_t} = O(r_t^\tau)$.
 \cref{thm:supermono,thm:supermono2} show that $\{r_t\}$ converges superlinearly to $0$.
Therefore, we can find $t_1 \ge 0$ and $\eta \in [0,1)$ such that
\begin{equation*}
	r_{t+1} \leq \eta r_t, \quad \forall t \geq t_1.
\end{equation*}
Thus we get
\[
	\sum_{t=t_1}^{\infty} \norm{p_t} = O\bigg( \sum_{t = t_1}^{\infty}
	\eta^{\tau(t-t_1)} r_{t_1}^{\tau} \bigg)
	= O\bigg(\frac{r_{t_1}^{\tau}}{ 1 - \eta^{\tau}}  \bigg) < \infty,
\]
so $\{\norm{p_t}\}$ is summable.
Thus $\{x_t\}$ is a Cauchy sequence, so it converges strongly to a point $x^*$ because of completeness of the  Hilbert space ${\cal H}$.
Moreover, $\dist(x_t,\cS) \rightarrow 0$ implies that  $\dist(x^*,\cS) = 0$,  so by closedness of $\cS$ we have $x^* \in \cS$.
\end{proof}

Comparing \cref{eq:ineqmono} and \cref{eq:ineqmono2},
we see that the difference is that the inequality 
\begin{equation}
(1+\rho)q > 1 
\label{eq:eq1}
\end{equation}
in the former is replaced by the two inequalities
\begin{equation}
\rho+q>1, \quad \rho>0
	\label{eq:eq2}
\end{equation}
in the latter.
Since $q \leq 1$, we have $\rho \geq \rho q$, so
\cref{eq:eq1} implies \cref{eq:eq2}.
We discuss several interesting realizations of \cref{eq:ineqmono,eq:ineqmono2} below.
(To our knowledge, the analysis for generalized equation \eqref{eq:fmono} is new, and for the special case  \eqref{eq:f}, our bounds are new.)
\begin{remark}[Convergence Regimes]
\label{ex:ratesmono}
\begin{enumerate}
\item Quadratic convergence occurs when $p = \rho = q = 1$, that is,
	when the Luo-Tseng error bound holds and $\nabla^2 f$ is Lipschitz
	continuous (for \eqref{eq:f}) or
	when the error bound condition with $q=1$ holds with $\nabla A$ Lipschitz continuous
	(for \cref{eq:fmono}).
	(Our results show that $p=1$ and $q \geq \rho \geq 1$
	imply a convergence rate that is at least quadratic.)
\item If $p=1$ and $q \leq 1$, \cref{eq:ineqmono} implies
\begin{equation*}
\left\{
\begin{aligned}
2q &> 1\\
(1+\rho)q &> 1\\
2 \left(2q - \rho\right) &> 1
\end{aligned}
\right.
\; \Leftrightarrow\;
\left\{
\begin{aligned}
q &> \tfrac{1}{2}\\
\rho &> \tfrac{1-q}{q}\\
2q -\tfrac{1}{2} &> \rho
\end{aligned}
\right.
\; \Leftrightarrow \;
\left\{
\begin{aligned}
q &> \tfrac{1}{2}\\
q &> \tfrac{-1 + \sqrt{33}}{8} \\
\rho &\in (\tfrac{1-q}{q},2q-\tfrac12).
\end{aligned}
\right..
\end{equation*}
(We obtain the second formula in the last column by ensuring
the $\rho$-interval is nonempty.) 
We thus attain $Q$-superlinear convergence of $\{d_t\}$ provided that
$q > \tfrac18 (\sqrt{33}-1) ) \approx 0.593$, as long as  $\rho < 2q-\tfrac12$.
(We can set $\rho = \tfrac14 (\sqrt{33}-1) -\tfrac12  = \tfrac14 ( \sqrt{33}-3 )$ to maximize the allowable range of $q$.)
By comparison with the result of
\citet{MorYZZ20a,LiuPWY24a,DahK24a}, which required $\rho \leq q$, our analysis shows that superlinear convergence can be obtained even in some situations where $\rho > q$.
In addition, \citet{MorYZZ20a} does not guarantee $Q$-superlinear
convergence of $\{d_t\}$, while the result of Yue et al.
\cite{YueZS19a} has such a guarantee only for $q=1$ and
\cite{LiuPWY24a,DahK24a} for $q > \tfrac12 (\sqrt{5}-1) \approx 0.618$.
By contrast, our bound allows a wider range for $q$.
When our analysis is applied to smooth optimization problems, the allowed range is also wider than that in \cite[Theorem~5]{Lee20a}.
For \cref{eq:ineqmono2}, $p=1$ and $q \leq 1$ then imply that
\[
\left\{
	\begin{aligned}
		\rho &> 0,\\
		q &> \tfrac12,\\
		\rho + q &> 1,\\
		4q - 2 \rho & > 1,
	\end{aligned}
 \right.
\]
	so $q > 1/2$ with $\rho = 1/2$ implies $Q$-superlinear convergence
	for $\{r_t\}$ and $R$-superlinear convergence for $\{d_t\}$.

\item If $q = 1$ and $p>0$, then any $\rho \in (0,p)$ implies superlinear convergence.
In contrast, if $q=1$, $p=\rho=0$,  the damping becomes a positive constant, making $J_t$ uniformly bounded. 
Linear convergence could then by obtained from standard analysis of first-order-like variable metric approaches.
\end{enumerate}
\end{remark}

\ifdefined\arxiv
Inspired by the last item in \cref{ex:ratesmono}, we further
demonstrate that the $p$-H\"older assumption on $\nabla A$ can be
relaxed to merely \emph{uniform continuity}, provided the damping and
stopping tolerance decay sufficiently slowly.
We present this result in \cref{app:unicont} as its proof is technical
but largely mirrors the analysis in this section.
\fi

\section{A Line Search Strategy for Convex Regularized Optimization}
\label{sec:global}

In this section, we focus on \cref{eq:f}, the special case of \cref{eq:fmono} obtained by setting $A = \nabla f$ and  $B = \partial \Psi$.
Although solutions of \cref{eq:fmono} in general only correspond to stationary points of \cref{eq:f}, we make the further assumption that $f$ is convex (similar to \cite{YueZS19a,MorYZZ20a}), which causes the solution sets of \cref{eq:fmono,eq:f} to coincide in this case.
In this scenario, $A$ is also maximal monotone, so \cite[Theorem~3.5]{DruL16a} indicates that
\begin{equation}
r(x) \leq \dist(0,(A+B)(x)),    
\label{eq:rtosubgrad}
\end{equation}
or equivalently $r(x) \leq \dist(0, \partial F(x))$ for \cref{eq:f}, for all $x$.
Therefore, the error bound \cref{eq:qmetricmono} further implies a more intuitive local upper bound of $\dist(x,S)$ by $\partial F$.
Moreover, \cite[Theorem~3.4]{MorO15a} shows that
	this bound, together with convexity, implies a growth condition that
	effectively upper bounds $\dist(x,S)$ by the objective gap.
	This characterization serves as a critical tool in our subsequent analysis.

We describe algorithms that attain global convergence to the solution set, and examine their rates of convergence.
To simplify the description of our globalization strategy, we further assume that $\nabla f$ is globally $L$-Lipschitz continuous in its domain, not just locally Lipschitz, as in our discussion above. 

We first discuss an existing algorithm due to \cite{MorYZZ20a},  and argue that it guarantees global convergence and ensures local superlinear convergence of both $r_t$ and $d_t$ to $0$ when the conditions in \cref{thm:supermono} or \cref{thm:supermono2} hold.
We then propose a novel strategy that retains these properties and adds another property, namely, strict  decrease of the objective function at  each iteration.
Importantly, our new approach exhibits $Q$-superlinear convergence of the objective value to its optimal value, which we believe to be a new result even for nondegenerate problems. 
Existing analyses for ensuring local superlinear convergence in proximal-Newton-type methods require Lipschitz continuity of the Hessian of $f$ and depend on a Taylor expansion to guarantee acceptance of the unit step size. 
We use instead a novel mechanism and analysis that relies on the
convexity of $F$ and the H\"olderian error bound condition to
guarantee sufficient function decrease for a unit step.
Note that we do not need to assume that the Hessian $A(x) = \nabla^2 f(x)$ is Lipschitzian.

\subsection{Two Algorithms} \label{sec:lsalg}

In our following discussion of both the existing approach and our new globalization approaches for \cref{eq:f}, a tentative iterate $\tilde x_{t+1}$ is first obtained from the approximate minimization
\begin{equation}
\tilde x_{t+1} \approx \argmin_x \, q_t(x),
\label{eq:newton}
\end{equation}
with
\begin{equation} \label{eq:subprob}
\begin{aligned}
q_t(x)&\coloneqq \inprod{g_t}{x - x_t} + \frac{1}{2} \inprod{H_t
\left( x - x_t \right)}{x - x_t} + \Psi\left( x \right),\\
g_t &\coloneqq \nabla f(x_t), \;\; H_t \coloneqq
J_t + \mu_t \I, \;\; \mu_t \coloneqq c
r(x_t)^{\rho},
\end{aligned}
\end{equation}
where $r(\cdot)$ follows the definition in \cref{eq:rmono,eq:rdefmono}, which can be  written equivalently in this setting as follows:
\begin{equation} \label{eq:defR3}
R(x) = x - \prox_\Psi \left( x - \nabla f(x) \right), \quad
r(x) = \norm{R(x)}, 
\end{equation}
and $J_t$ is a linear operator satisfying\footnote{For
	$J_t$ to satisfy this definition, $\nabla^2 f$ needs to be
	positive semidefinite at any accumulation point $\bar x$ of the
	sequence $\{x_t\}$ that satisfies the stationarity condition $r(\bar{x})=0$. We refer to this property as ``local convexity." Our algorithm can be extended to nonconvex problems by considering other choices of upper-bounded and positive-definite $J_t$, with the  global convergence results of the next subsection continuing to hold. However, this local convexity assumption is necessary for superlinear convergence.}
\begin{equation}
J_t \;\; \mbox{\rm symmetric}, \quad J_t \succeq 0, \quad
\norm{J_t - \nabla^2 f(x_t)} =
O\big( r( x_t )^\theta \big) \text{ for some } \theta \geq \rho.
\label{eq:B}
\end{equation}
A line search procedure is applied to the update direction 
\begin{equation}
	\tilde p_t
	\coloneqq \tilde x_{t+1} - x_t
	\label{eq:tildep}
\end{equation}
to find a suitable step size $\alpha_t > 0$; we then set $x_{t+1} \gets x_t + \alpha_t \tilde p_t$.
To ensure a descent direction and global convergence, the point $\tilde x_{t+1}$ from \eqref{eq:newton} is required to satisfy the conditions
\begin{equation}
\label{eq:stop}
q_t(\tilde x_{t+1}) \leq q_t(x_t),\quad
\hat r_t(\tilde x_{t+1}) \leq \nu \min\left\{ r(x_t)^{1 + \rho}, r(x_t)
\right\},\quad \nu \in [0,1).
\end{equation}
In the setting of regularized optimization, we have that
$\hat r_t(\cdot)$ in \cref{eq:residual} is equivalent to
\begin{equation}
\label{eq:rhat}
	\hat r_t(\tilde x_{t+1}) = \| \hat R_t(\tilde x_{t+1}) \|, \;
	\hat R_t\left( \tilde x_{t+1} \right) = \tilde x_{t+1} -
	\prox_{\Psi}\left(\tilde x_{t+1} - g_t - H_t
\left( \tilde x_{t+1} - x_t \right) \right).
\end{equation}

The conditions above are just specific realizations of \cref{eq:local,eq:Bmono,eq:residual,eq:stopmono} for the case of \cref{eq:f}, except that \cref{eq:stop} contains additional requirements on $q_t$, $\nu$, and $r_t = r(x_t)$.
Thus, the results that we derived for $p_t$ in Section~\ref{sec:local}, such as \cref{eq:pbmono}, apply to $\tilde p_t$ as well.
Moreover, we require  $J_t$ to be symmetric, which is natural in this setting, since it is an approximation of the Hessian $\nabla^2 f(x_t)$.

The first approach we consider, shown in \cref{alg:gdnm}, is due to \cite{MorYZZ20a}. 
(Both this approach and its successor, \cref{alg:newton2}, require the conditions \cref{eq:stop} to hold.)

\begin{algorithm}[tbh]
\DontPrintSemicolon
\SetKwInOut{Input}{input}\SetKwInOut{Output}{output}
\caption{Proximal Newton Method in \cite{MorYZZ20a}}
\label{alg:gdnm}
\Input{$x_0\in\H$, $\beta,\gamma,\sigma\in (0,1)$, $\rho > 0$, $\nu
	\geq 0$, $c > 0$, $\bar C>F(x_0)$}

$\eta \leftarrow r(x_0)$

\For{$t=0,1,\ldots$}{
	Select $H_t$ satisfying \cref{eq:subprob,eq:B} with $\theta = 1$
	and find an approximate solution $\tilde x_{t+1}$ of \cref{eq:newton}
	satisfying \cref{eq:stop}

 $\tilde p_t \leftarrow \tilde x_{t+1} - x_t$

	\If{$t > 0$, $r(\tilde{x}_{t+1})\le\sigma \eta$, 
 and $F(\tilde{x}_{t+1})\le \bar C$}
	{
		$\alpha_t\leftarrow 1$,
  $\eta \leftarrow r(\tilde{x}_{t+1})$
	}
	\Else
	{		
		$\alpha_t \leftarrow \beta^{m_t}$, where $m_t$ is the smallest nonnegative integer $m$ such that
\begin{equation}\label{eq:armijo}
F\left( x_t + \beta^m \tilde p_t \right)\le F\left( x_t
\right)-\gamma \mu_t\beta^m \norm{\tilde p_t}^2.
\end{equation}
}	
Set $x_{t+1}\leftarrow x_t + \alpha_t \tilde p_t$. 
}
\end{algorithm}

If the conditions of \cref{thm:supermono,thm:supermono2} hold, then when $r(x_t)$ is small enough, all subsequent iterations of \cref{alg:gdnm} will accept the unit step through the condition in  Line 5 (no backtracking required), as $Q$-superlinearly convergence of $\{r(x_t)\}$ implies that this sequence also converges $Q$-linearly with an arbitrarily fast rate, and global convergence of $r(x_t)$ to $0$ is guaranteed by \cite[Theorem~3.1]{MorYZZ20a}.
In the latter reference, local superlinear convergence requires local Lipschitz continuity of $\nabla^2 f$, so our analysis in \Cref{sec:local} (which required H\"older continuity of $\nabla^2 f$) slightly broadens the problem class for which \cref{alg:gdnm} is superlinearly convergent.
\cref{thm:supermono} provides additional guarantees for $Q$-superlinear convergence of $\{d_t\}$ not covered by
\cite{MorYZZ20a}. %
Moreover, our requirement for $\theta \geq \rho$ in \eqref{eq:B} is less restrictive than the choice $\theta = 1$ in \cite{MorYZZ20a}.

\cref{alg:gdnm} does not require monotonic decrease of function values, as the conditions of Line 5 can be satisfied without having $F(\tilde{x}_{t+1})<F(x_t)$.
Our new approach, \cref{alg:newton2}, will ensure strict function decrease at each step while retaining $Q$-superlinear convergence of the function values.

The first distinctive element in \cref{alg:newton2} is to replace the sufficient decrease condition \cref{eq:armijo} with 
\begin{equation}
\label{eq:newarmijo}
F(x_{t+1}) \leq F\left( x_t \right)-\gamma \alpha_t^2 \norm{\tilde p_t}^{2+\delta}
\end{equation}
(where $\gamma$, $\tilde p_t$, and $\alpha_t$ are defined as in \cref{alg:gdnm}, but $x_{t+1}$ is defined differently; see below)
for some given $\delta \ge 0$.
Our analysis in the next subsection will show that when the conditions \cref{eq:ineqmono} are satisfied by $p$, $q$, and $\rho$, we can set $\delta=2$.
Since $\tilde p_t \neq 0$ for all $t$, 
the objective value is always decreasing.
The second distinctive element of \cref{alg:newton2}, inspired by a technical result from \cite[Lemma~2.3]{BecT09a} on how $r(x)$ and $\dist(x,S)$ bound the objective gap, is to test the condition \cref{eq:newarmijo} on a point obtained from a proximal gradient step.
In this vein, we first compute
\begin{equation}
    \label{eq:y}
y_{t+1} (\alpha_t) \leftarrow x_t + \alpha_t \left(\tilde x_{t+1}  - x_t 
		\right) = x_t + \alpha_t \tilde p_t,
\end{equation}
and then compute a proximal gradient step from $y_{t+1}(\alpha_t)$ to obtain the candidate $\bar{x}_{t+1}(\alpha_t)$ for the next iterate, as follows
\begin{equation}\label{eq:xbar}
\begin{aligned}
\bar x_{t+1}(\alpha_t) \gets &~ \prox_{(\Psi/L)}\big(
y_{t+1}(\alpha_t) - \tfrac{1}{L}\nabla f(y_{t+1}(\alpha_t))
\big) \\
= &~ y_{t+1}(\alpha_t) - \tfrac{1}{L} G_L(y_{t+1}(\alpha_t)),
\end{aligned}
\end{equation}
where $L$ is the Lipschitz constant for $\nabla f$,
 \begin{equation} \label{eq:defGL}
	G_{L}(x) \coloneqq L \Big(x - \prox_{\frac{\Psi}{L}} \Big(x -
	\frac{1}{L} \nabla f(x) \Big)\Big)
\end{equation}
is the proximal gradient, and $\prox_{g}$ denotes the proximal operator
\begin{equation*}
	\prox_g(x) \coloneqq \min_{y\in \H}\; \tfrac12\norm{y-x}^2 + g(y)
\end{equation*}
for any function $g$.\footnote{For simplicity, we use
        a given (possibly rough) upper bound $L$ on the actual
		Lipschitz constant here, but we can also conduct another
		backtracking on $L$ to remove dependency on knowledge of this
		problem parameter.
        Our analysis still holds (with some additional calculations and notations) as long as $L$ satisfies the following condition:
        \[
        F(\bar x_{t+1}(\alpha_t))
        \leq F(y_{t+1}(\alpha_t)) + \inprod{\nabla f(y_{t+1}(\alpha_t))}{z_{t+1}(\alpha_t)} + \frac{L}{2} \norm{z_{t+1}(\alpha_t)}^2 + \Psi(\bar x_{t+1}(\alpha_t)),
        \]
        where $z_{t+1}(\alpha_t) \coloneqq \bar x_{t+1}(\alpha_t) - y_{t+1}(\alpha_t)$.
    }

The full algorithm can be specified as follows.

\begin{algorithm}[tbh]
\setcounter{AlgoLine}{0}
\DontPrintSemicolon
\SetKwInOut{Input}{input}\SetKwInOut{Output}{output}
\caption{A Proximal Newton Method Guaranteeing Strict Decrease and
Superlinear Convergence for the Objective Value}
\label{alg:newton2}
\Input{$x_0\in\H$, $\beta \in (0,1)$, $\gamma\in (0,1)$, $\nu \in [0,1)$, $\rho \in
(0,1]$,
	$c> 0$, $\delta \ge 0$, Lipschitz constant $L$ for $\nabla f$}

\For{$t=0,1,\ldots$}{
Select $H_t$ satisfying \cref{eq:subprob,eq:B} with $\theta \geq \rho$ and find an approximate solution $\tilde x_{t+1}$
of \cref{eq:newton} satisfying \cref{eq:stop}

	$\alpha_t \leftarrow 1, \quad \tilde p_t \gets \tilde x_{t+1} - x_t$

 Terminate if $\tilde p_t = 0$

	\While{True}
	{
		Compute $y_{t+1}(\alpha_t)$ from  \cref{eq:y} and $\bar x_{t+1}(\alpha_t)$ from \cref{eq:xbar}

		\If{$F(\bar x_{t+1}(\alpha_t)) \leq F(x_t) - \gamma \alpha_t^2 \norm{\tilde p_t}^{2+\delta}$}
		{
			$x_{t+1} \leftarrow \bar{x}_{t+1}(\alpha_t)$ 

			Break
		}
		\lElse{
			$\alpha_t \leftarrow \beta \alpha_t$
		}
	}
}
\end{algorithm}

The step taken at each iteration of \cref{alg:newton2} is a composition of a prox-Newton step (with backtracking) and a short prox-gradient step (with the conservative choice $1/L$ for the steplength parameter).
(One could instead use $\min\{\norm{\tilde p_t}^2, \norm{\tilde p_t}^{2+\delta}\}$ in Line 7 to make the acceptance condition easier to achieve in the early stage when $\norm{\tilde p_t}$ is still large, and our analysis below still remains valid.)

\subsection{Analysis}

We show first that the line search criterion in Algorithm~\ref{alg:newton2} is satisfied for all $\alpha_t$ sufficiently small (\cref{lemma:ls}), and then deduce  global convergence (\cref{lemma:global}).
The first lemma does not require convexity of $f$; the second requires the property \eqref{eq:B}, which holds only if $\nabla^2 f$ is positive semidefinite at stationary accumulation points of the sequence $\{ x_t \}$.
From standard analysis of proximal gradient, we know that 
\begin{equation}
F(\bar x_{t+1}(\alpha_t)) \leq F(y_{t+1}(\alpha_t)),\quad \forall
\alpha_t \geq 0,
\label{eq:pg}
\end{equation}
where $\bar x_{t+1}$ is defined in \cref{eq:xbar}.
This fact will play an important role in the following two results.
\begin{lemma} \label{lemma:ls}
Given $\beta \in (0,1)$ and $\gamma \in (0,1)$, assume that $f$ is $L$-Lipschitz-continuously differentiable for some $L > 0$ and that $\Psi$ is convex, proper, and closed.
At iteration $t$ of Algorithm~\ref{alg:newton2}, the final value of $\alpha_t$ satisfying the sufficient decrease condition \eqref{eq:newarmijo} satisfies the following condition:
\begin{equation}
\mbox{either $\alpha_t=1$} \quad\mbox{ or }\quad
\alpha_t \ge \beta\mu_t\big( L+
2\gamma\norm{\tilde p_t}^{\delta} \big)^{-1}.
\label{eq:lsbound}
\end{equation}
\end{lemma}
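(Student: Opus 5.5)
The plan is the standard backtracking argument. First find an explicit threshold $\bar\alpha$ such that every $\alpha\in(0,\min\{1,\bar\alpha\}]$ satisfies the test in Line~7. Then note that backtracking from $\alpha_t=1$ by factors of $\beta$ stops either at $\alpha_t=1$ or at some $\alpha_t>\beta\bar\alpha$. The reason is that the last rejected trial value $\alpha_t/\beta$ must exceed $\bar\alpha$. By \cref{eq:pg}, $F(\bar x_{t+1}(\alpha))\le F(y_{t+1}(\alpha))$, so it suffices to show
\[
F(x_t+\alpha\tilde p_t)\le F(x_t)-\gamma\alpha^2\norm{\tilde p_t}^{2+\delta}
\]
for all such $\alpha$, with $\bar\alpha = \mu_t(L+2\gamma\norm{\tilde p_t}^\delta)^{-1}$.

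The first step is a bound on the model decrease. I use $q_t(\tilde x_{t+1})\le q_t(x_t)=\Psi(x_t)$ from \cref{eq:stop}, together with convexity of $q_t$ along the segment, to get
\[
q_t(x_t+\alpha\tilde p_t)\le (1-\alpha)q_t(x_t)+\alpha q_t(\tilde x_{t+1}).
\]
Since $q_t$ is $\mu_t$-strongly convex ($H_t\succeq\mu_t\I$), this sharpens to
\[
q_t(x_t+\alpha\tilde p_t)\le(1-\alpha)q_t(x_t)+\alpha q_t(\tilde x_{t+1})-\tfrac{\mu_t}{2}\alpha(1-\alpha)\norm{\tilde p_t}^2 .
\]
In terms of $\Delta_t \coloneqq \inprod{g_t}{\tilde p_t}+\Psi(\tilde x_{t+1})-\Psi(x_t)$, the inequality $q_t(\tilde x_{t+1})\le q_t(x_t)$ reads $\Delta_t\le -\tfrac12\inprod{H_t\tilde p_t}{\tilde p_t}\le -\tfrac{\mu_t}{2}\norm{\tilde p_t}^2$.

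The second step combines this with smoothness. The descent lemma for the $L$-Lipschitz gradient and convexity of $\Psi$ give
\[
F(x_t+\alpha\tilde p_t)\le F(x_t)+\alpha\Delta_t+\tfrac{L}{2}\alpha^2\norm{\tilde p_t}^2 .
\]
To obtain the constant $\mu_t$ rather than $\mu_t/2$ in the threshold, I will use the strong-convexity version: $\tilde x_{t+1}$ approximately minimizes $q_t$, and $q_t(x_t)-q_t(\tilde x_{t+1})\ge0$. Applying strong convexity between $x_t$ and $\tilde x_{t+1}$ in the form $q_t(x_t)\ge q_t(\tilde x_{t+1})+\tfrac{\mu_t}{2}\norm{\tilde p_t}^2$ would require exact minimality. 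So I will instead combine the two available facts: $\Delta_t\le-\tfrac12\inprod{H_t\tilde p_t}{\tilde p_t}$ and $\inprod{H_t\tilde p_t}{\tilde p_t}\ge\mu_t\norm{\tilde p_t}^2$. This yields
\[
F(x_t+\alpha\tilde p_t)-F(x_t)\le -\tfrac{\alpha}{2}\mu_t\norm{\tilde p_t}^2+\tfrac{L}{2}\alpha^2\norm{\tilde p_t}^2 .
\]
The target inequality then holds whenever $\tfrac{\alpha}{2}(\mu_t - L\alpha)\ge\gamma\alpha^2\norm{\tilde p_t}^\delta$, that is, whenever $\alpha\le \mu_t(L+2\gamma\norm{\tilde p_t}^{\delta})^{-1}$. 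This is exactly $\bar\alpha$, so the constants match \cref{eq:lsbound}.

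The final step is the backtracking bookkeeping described in the first paragraph. If $\alpha_t<1$, then $\alpha_t/\beta$ was rejected, so $\alpha_t/\beta>\bar\alpha$ and $\alpha_t\ge\beta\bar\alpha$. This also shows the while loop terminates, because $\tilde p_t\ne0$ gives $\bar\alpha>0$.

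The main obstacle is the model-decrease inequality $\Delta_t\le-\tfrac12\inprod{H_t\tilde p_t}{\tilde p_t}$, since only $q_t(\tilde x_{t+1})\le q_t(x_t)$ is available rather than exact minimization. Fortunately, that condition alone rearranges to precisely this inequality, because $q_t(\tilde x_{t+1})-q_t(x_t)=\Delta_t+\tfrac12\inprod{H_t\tilde p_t}{\tilde p_t}$. Convexity of $f$ is not needed at any point.
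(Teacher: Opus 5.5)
Your proposal is correct and is essentially the paper's proof. The ingredients are the same: the descent lemma plus convexity of $\Psi$, the rearrangement of $q_t(\tilde x_{t+1})\le q_t(x_t)$ into $\inprod{g_t}{\tilde p_t}+\Psi(\tilde x_{t+1})-\Psi(x_t)\le-\tfrac12\inprod{H_t\tilde p_t}{\tilde p_t}\le-\tfrac{\mu_t}{2}\norm{\tilde p_t}^2$, the inequality \eqref{eq:pg}, and the backtracking-overshoot step; your strong-convexity detour along the segment is never used and can be dropped.

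One minor correction to your side remark on termination: $\bar\alpha>0$ needs $\mu_t>0$, i.e.\ $r(x_t)>0$, and this does not follow from $\tilde p_t\neq0$ alone. This does not affect the lemma, which only concerns the final accepted $\alpha_t$.
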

\begin{proof}
Since $f$ is $L$-Lipschitz-continuously differentiable, we have 
\begin{equation*}
f\left( x_t + \alpha_t \tilde p_t  \right) \leq f\left( x_t \right) +
\alpha_t\inprod{\nabla f(x_t)}{\tilde p_t} +
\frac{\alpha_t^2 L}{2} \norm{\tilde p_t}^2.
\end{equation*}
Convexity of $\Psi$ implies that
\begin{equation*}
	\Psi ( x_t + \alpha_t \tilde p_t ) - \Psi ( x_t )\leq
	\alpha_t \left( \Psi ( x_t + \tilde p_t ) - \Psi ( x_t
	\right) ) = \alpha_t \left( \Psi\left( \tilde{x}_{t+1} \right) - \Psi\left( x_t
	\right) \right).
\end{equation*}
By summing these two inequalities, we obtain
\begin{eqnarray}
\nonumber
F\left( x_t + \alpha_t \tilde p_t \right)
&\le& F\left( x_t \right) + \alpha_t \left(\inprod{\nabla f(x_t)}{\tilde p_t} +
\Psi\left( \tilde x_{t+1} \right) - \Psi\left( x_t \right)\right) +
\frac{\alpha_t^2 L}{2} \norm{\tilde p_t}^2\\
&\stackrel{\cref{eq:stop}}{\leq}&
F\left( x_t \right) - \frac{\alpha_t}{2} \inprod{\tilde p_t}{H_t \tilde p_t} +
\frac{\alpha_t^2 L}{2} \norm{\tilde p_t}^2.
	\label{eq:lsb1}
\end{eqnarray}
From  $J_t \succeq 0$, we have that $H_t \succeq \mu_t I$ and thus $-\inprod{\tilde p_t}{H_t \tilde p_t} \le -\mu_t\norm{\tilde p_t}^2$.
Therefore, \eqref{eq:lsb1} and \cref{eq:pg} lead to
\begin{equation*}
F\left( \bar x_{t+1}\left(\alpha_t\right) \right) - F\left( x_t
\right) \leq F\left( x_t + \alpha_t \tilde p_t \right) - F\left( x_t
\right) \le \frac{\norm{\tilde p_t}^2}{2}\left( \alpha_t^2 L - \alpha_t
\mu_t \right),
\end{equation*}
implying that \cref{eq:newarmijo} is satisfied provided that
\begin{equation*}
\frac{\norm{\tilde p_t}^2}{2}\left( \alpha_t^2 L - \alpha_t \mu_t
\right) \le -\alpha_t^2\gamma \norm{\tilde p_t}^{2+\delta} \quad \stackrel{\alpha_t,
	\norm{\tilde p_t} \geq 0}{\Longleftrightarrow} \quad
\alpha_t \le \mu_t \big( L+ 2 \gamma\norm{\tilde p_t}^\delta \big)^{-1}.
\end{equation*}
The bound \cref{eq:lsbound} is then obtained after considering the overshoot of backtracking.
\end{proof}

In the following result, we denote by $\cS$ the set of stationary points for \cref{eq:f}, that is, points $x$ for which $r(x)=0$. Such points are solutions of \cref{eq:fmono}.
This result additionally requires $\theta \leq 1$, but
we note that this condition is more flexible than existing results such as those in \cite{MorYZZ20a}, which require $\theta = 1$.
The condition $\theta \leq 1$ together with  \cref{eq:B} imply that we also need $\rho \leq 1$.
From Item 3 of \cref{ex:ratesmono}, we see that when $q=1$ in \cref{eq:qmetricmono} (that is, when the Luo-Tseng error bound holds), since $p \leq 1$, then $\rho \leq 1$ is already a necessary condition for superlinear convergence.
Thus, our requirement is not more restrictive than existing conditions in the literature.
In the proof of the following lemma, we use a technical result originally from \cite[(12)]{YueZS19a} to upper bound $r_t$ by $\norm{\tilde p_t}$.
For completeness, we state and prove it in \cref{app:lemma}.
\begin{lemma}
\label{lemma:global}
Consider the setting of \cref{lemma:ls} and assume that $\theta \leq 1$, $\delta \leq 2$, $F$ is lower bounded by some $\bar F > -\infty$, and $J_t$ satisfies \cref{eq:B} for all $t$.
Then either \cref{alg:newton2} terminates at some $x_t$ for which $r(x_t)=0$ (thus $x_t \in \cS$), or else
\begin{equation}
	\lim_{t\rightarrow \infty} r(x_t) = 0.
	\label{eq:globalconv}
\end{equation}
If in addition $\cS$ is nonempty, then any accumulation point of $\{x_t\}$ is in $\cS$.
\end{lemma}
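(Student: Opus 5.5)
The argument rests on the telescoping sum of the sufficient decrease condition \cref{eq:newarmijo}. If the algorithm terminates because $\tilde p_t = 0$, I will show that $\tilde x_{t+1}=x_t$ forces $r(x_t)=0$. Indeed, with $\tilde x_{t+1}=x_t$ the residual \cref{eq:rhat} reduces to $\hat r_t(x_t) = r(x_t)$. Condition \cref{eq:stop} then gives $r(x_t) \le \nu \min\{r_t^{1+\rho}, r_t\} \le \nu r_t$, and since $\nu<1$ this yields $r_t = 0$.

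Otherwise every iteration accepts a step. Summing \cref{eq:newarmijo} over $t$ and using $F \ge \bar F$ gives
\[
\sum_t \alpha_t^2 \norm{\tilde p_t}^{2+\delta} \le \big(F(x_0) - \bar F\big)/\gamma < \infty,
\]
so $\alpha_t^2 \norm{\tilde p_t}^{2+\delta} \to 0$. Next I bring in \cref{lemma:ls}: either $\alpha_t = 1$ or $\alpha_t \ge \beta \mu_t (L + 2\gamma \norm{\tilde p_t}^\delta)^{-1}$, where $\mu_t = c r_t^\rho$.

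I also need a lower bound on $\norm{\tilde p_t}$ in terms of $r_t$. For this I use the technical result from the appendix (adapted from \cite[(12)]{YueZS19a}), which says $r_t \le C(1+\norm{H_t})\norm{\tilde p_t}$ up to the inexactness term. Its derivation uses nonexpansiveness of the prox, $R(x_t)$ versus $\hat R_t(\tilde x_{t+1})$, and $\hat r_t(\tilde x_{t+1}) \le \nu r_t$ with $\nu<1$. I need $\norm{H_t}$ bounded, which follows from $\norm{\nabla^2 f}\le L$, from \cref{eq:B} with $\theta \le 1$ (so the error is $O(r_t^\theta)$ with $r_t$ bounded, because $r$ is Lipschitz-continuous and the iterates lie in a level set), and from $\mu_t = c r_t^\rho$ being bounded. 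The result is $\norm{\tilde p_t} \ge c' r_t$.

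With these bounds I argue by contradiction. Suppose $r_t \ge \varepsilon > 0$ along a subsequence. There $\norm{\tilde p_t} \ge c'\varepsilon$, so $\alpha_t \to 0$ is forced on that subsequence. Therefore eventually $\alpha_t<1$ and $\alpha_t \ge \beta c r_t^\rho/(L+2\gamma\norm{\tilde p_t}^\delta)$. This gives
\[
\alpha_t^2\norm{\tilde p_t}^{2+\delta} \gtrsim \frac{r_t^{2\rho}\norm{\tilde p_t}^{2+\delta}}{(L+2\gamma\norm{\tilde p_t}^\delta)^2}.
\]
This is the main obstacle, because $\norm{\tilde p_t}$ could be large. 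The restriction $\delta\le 2$ handles it: for large $\norm{\tilde p_t}$ the right-hand side behaves like $r_t^{2\rho}\norm{\tilde p_t}^{2-\delta} \ge r_t^{2\rho}\min\{1,\norm{\tilde p_t}\}^{2-\delta}$, and for bounded $\norm{\tilde p_t}$ it behaves like $r_t^{2\rho}\norm{\tilde p_t}^{2+\delta}$. In either case it is bounded below by a positive function of $\varepsilon$, which contradicts summability. Hence $r(x_t)\to 0$.

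Finally, for an accumulation point $x^*$ with $x_{t_k}\to x^*$, continuity of $r$ gives $r(x^*) = \lim r(x_{t_k}) = 0$, so $x^* \in \cS$. Continuity of $r$ holds because the prox is nonexpansive and $\nabla f$ is Lipschitz.
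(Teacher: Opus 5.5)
There is a gap at the step where you bound $\norm{H_t}$. You say $r_t$ is bounded ``because $r$ is Lipschitz-continuous and the iterates lie in a level set.'' No hypothesis of the lemma makes $\{x : F(x)\le F(x_0)\}$ bounded, and a Lipschitz function on an unbounded set can be unbounded. So this reasoning does not give $\sup_t \norm{H_t}<\infty$, which you need for $\norm{\tilde p_t}\ge c' r_t$. A symptom is that the hypothesis $\theta\le 1$ does no work in your argument: if $r_t$ were bounded, $O(r_t^\theta)$ would be bounded for every $\theta$. In the paper, $\theta\le1$ is exactly what excludes $r_t\to\infty$, both on $\cK_1$ (via $r_t\le C r_t^\theta\norm{\tilde p_t}$ against $\norm{\tilde p_t}\to0$) and on $\cK_{2a}$.

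The repair is easy, because your contradiction argument needs only a uniform positive lower bound on $\norm{\tilde p_t}$ over indices with $r_t\ge\varepsilon$, not a linear one. Use \cref{lem:rtbound} together with $\norm{H_t}\le L+c\,r_t^\rho+C\,r_t^\theta$ for $t$ large, where $C$ is the constant implicit in \cref{eq:B}, and $\rho\le\theta\le1$. Splitting into the cases $r_t\le 1$ and $r_t\ge 1$ gives
\[
\norm{\tilde p_t}\ \ge\ \frac{(1-\nu)\,r_t}{L+2+c\,r_t^\rho+C\,r_t^\theta}\ \ge\ \frac{(1-\nu)\min\{\varepsilon,1\}}{L+2+c+C}\qquad\text{whenever } r_t\ge\varepsilon .
\]
Everything after that in your proof goes through unchanged, since the lower bound on $\alpha_t$ from \cref{lemma:ls} uses only $r_t^\rho\ge\varepsilon^\rho$.

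Alternatively, boundedness of $r_t$ is in fact true, but it needs a real argument. The proximal-gradient sufficient-decrease inequality gives $\norm{G_L(x_t)}^2\le 2L(F(x_t)-\bar F)\le 2L(F(x_0)-\bar F)$, and $r\le C_2^{-1}\norm{G_L}$ by \cref{eq:RL}. On that route the hypothesis $\theta\le1$ is not used at all.

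With either fix, your single contradiction argument is a tidier organization of the same ingredients as the paper's proof. Forcing $\alpha_t\to0$ along the bad subsequence means you never treat unit steps separately, whereas the paper partitions the indices into $\cK_1$, $\cK_{2a}$, $\cK_{2b}$ and proves $r_t\to0$ on each. Your use of $\delta\le2$ for large $\norm{\tilde p_t}$ matches the paper's $\cK_{2b}$ case, and your finite-termination argument via $\hat r_t(x_t)=r(x_t)\le\nu\, r(x_t)$ is correct.
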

\begin{proof}
In the case of finite termination, it follows from the condition \cref{eq:stop} that $\tilde{x}_{t+1}=x_t$ only if $r(x_t)=0$. 
Otherwise, by summing \cref{eq:newarmijo} from $t=0$ to $t = \infty$ with the understanding that $x_{t+1} = \bar x_{t+1}(\alpha_t)$, and by telescoping, we have that
\begin{equation} \label{eq:sh1}
	\gamma \sum_{t=0}^\infty \alpha_t^2 \norm{\tilde p_t}^{2+\delta} \leq F\left( x_0
	\right) - \bar{F} \quad \Rightarrow \quad \lim_{t \rightarrow \infty} \,
	\alpha_t^2 \norm{\tilde p_t}^{2+\delta} = 0.
\end{equation}
Let $\cK_1$ be the subsequence of iterates $t$ for which $\alpha_t=1$ and $\cK_2$ be the complementary set, for which $\alpha_t \ge \beta\mu_t\big( L+
2\gamma\norm{\tilde p_t}^{\delta} \big)^{-1}$, by \cref{lemma:ls}.
We further partition $\cK_2$ as $\cK_2 = \cK_{2a} \cup \cK_{2b}$, where for $t \in \cK_{2a}$ we have $L/(2\gamma) \geq \norm{p_{t}}^{\delta}$, while for $t \in \cK_{2b}$ we have $L/(2\gamma) < \norm{p_{t}}^{\delta}$.

If $\cK_1$ is infinite, we have from \cref{eq:sh1} that 
\begin{equation}
    \label{eq:sh2}
    \lim_{t \to \infty, \, t \in \cK_1} \, \norm{\tilde p_t} = 0.
\end{equation}
From our assumption that $f$ is $L$-Lipschitz continuous, we have that $\norm{\nabla^2 f} \leq L$, which together with \cref{eq:subprob,eq:B} indicates that
\begin{equation*}
\norm{H_t} \leq L + \mu_t + O(r_t^\theta).
\end{equation*}
Combination of this inequality with \cref{lem:rtbound} then implies that
\begin{equation}
	(1 - \nu) r_t \leq (\|H_t\| + 2) \norm{\tilde p_t} \leq (L + \mu_t + O(r_t^\theta) + 2) \norm{\tilde p_t}.
	\label{eq:rpbound}
\end{equation}
Assume for contradiction that $\{r_t\}_{t \in \cK_1}$ is not upper bounded,
then there is a subsequence $\cK_1^r \subset \cK_1$ such that 
\begin{equation} \label{eq:sh4}
\lim_{t \rightarrow \infty, t \in \cK_1^r}\, r_t = \infty.
\end{equation}
The equation above together with \cref{eq:rpbound}, the definition of $\mu_t$ in \cref{eq:subprob}, and the condition  $\theta \ge \rho$ in \cref{eq:B} implies that there is a constant $C \geq 0$ such that 
\begin{equation} \label{eq:sh5}
	r_t \leq C r_t^{\theta} \|\tilde p_t\|, \;\; \forall t \in \cK_1^r.
\end{equation}
The conditions \cref{eq:sh4} and \cref{eq:sh5} together imply that 
\begin{alignat*}{2}
	\infty &= \lim_{t \rightarrow \infty, t \in \cK_1^r} r_t^{1 - \theta}
	\leq \lim_{t \rightarrow \infty, t \in \cK_1^r} C \|\tilde p_t\|,
	&& \quad \text{ if $\theta < 1$},\\
	1 & \leq \lim_{t \rightarrow \infty, t \in \cK_1^r} C
	\|\tilde p_t\|,
	&& \quad \text{ if $\theta = 1$},
\end{alignat*}
a contradiction with \cref{eq:sh2}.
Therefore, we know that $\{r_t\}_{t \in \cK_1}$ is upper bounded, and \cref{eq:sh2} and \cref{eq:rpbound} thus imply that
\begin{equation}
    \lim_{t \rightarrow \infty, t \in \cK_1} r_{t} = 0.
    \label{eq:result0}
\end{equation}

Now we turn to the situation of  $\cK_2$  infinite.
From \cref{eq:sh1}, we have
\begin{equation}
\lim_{t \rightarrow \infty, t \in \cK_2} \norm{\tilde p_t}^{2+\delta}\frac{\mu_t^2}{4}
\left( \frac{L}{2}+ \gamma\norm{\tilde p_t}^{\delta} \right)^{-2} = 0.
\label{eq:inf}
\end{equation}
When the subsequence $\cK_{2a}$ is infinite, we have from \cref{eq:inf} that
\begin{equation}
	\lim_{t \rightarrow \infty, t \in \cK_{2a}} \norm{p_{t}}^{2+\delta}\frac{\mu_{t}^2}{4L^2}
= 0 \quad \Rightarrow \quad
\lim_{t \rightarrow \infty, t \in \cK_{2a}} \norm{p_{t}}^{2+\delta}\mu_{t}^2 = 0.
\label{eq:case1}
\end{equation}
By applying \cref{eq:rpbound} to \cref{eq:case1}, we then obtain
\begin{equation}
	\lim_{t \rightarrow \infty, t \in \cK_{2a}} \frac{\mu_{t}^2 r_{t}^{2+\delta}}{\left(2 + L +
\mu_{t} +
O\left( r_{t}^\theta \right)\right)^{2+\delta} } = 0.
\label{eq:penultimate}
\end{equation}

By inserting the definition \cref{eq:subprob}  of $\mu_t$ into \cref{eq:penultimate}, we get
\begin{equation}
	\lim_{t \rightarrow \infty, t \in \cK_{2a}} \frac{c^2 r_{t}^{2\rho + 2+\delta}}{\left(2 + L +
c r_t^\rho +
O\left( r_{t}^\theta \right)\right)^{2+\delta} } = 0.
\label{eq:penultimate2}
\end{equation}
If the sequence $\{ r_t\}_{t \rightarrow \infty, t \in \cK_{2a}}$ has an accumulation point at some positive finite value, the limit \cref{eq:penultimate2} cannot hold.
If there is a subsequence approaching $\infty$, then because $1 \geq \theta \geq \rho$, the denominator in  \cref{eq:penultimate2} is $O(r_t^{2+\delta})$ and since the numerator is $c^2r_t^{2\rho+2+\delta}$, so the limit in \cref{eq:penultimate2} over this subsequence is infinite, contradicting\cref{eq:penultimate2}. 
We therefore conclude that
\begin{equation}
\lim_{t \rightarrow \infty, t \in \cK_{2a}} r_{t} = 0.
\label{eq:result1}
\end{equation}
(To double check, we see that when \cref{eq:result1} holds,
$(2 + L + c r_t^\rho + O(r_t^\theta)) = O(1)$, and it indeed leads to \cref{eq:penultimate2} and thus \cref{eq:penultimate}.)

Suppose next that the subsequence $\cK_{2b}$ is infinite.
If $\delta = 2$, we have from  \cref{eq:inf} that
\begin{equation}
	\lim_{t\rightarrow \infty, t \in \cK_{2b}} \mu_{t}^2 = 0\quad
	\stackrel{\cref{eq:subprob}}{\Rightarrow} \quad
\lim_{t \rightarrow \infty, t \in \cK_{2b}} r_{t} = 0.
\label{eq:result2}
\end{equation}
For $\delta < 2$, from the lower-boundedness of $\|\tilde p_t\|$ in $\cK_{2b}$, we see that
\begin{equation}
\label{eq:result3}
\lim_{t \rightarrow \infty, t \in \cK_{2b}} \norm{\tilde p_t}^{2 - \delta} \mu_t^2 = 0 \quad \Rightarrow \lim_{t \rightarrow \infty, t \in \cK_{2b}} \mu_t^2 = 0 \quad \Rightarrow \lim_{t \rightarrow \infty, t \in \cK_{2b}} r_t = 0.
\end{equation}

The desired result for the full sequence $\{r_t\}$ is thus obtained by combining \cref{eq:result0,eq:result1,eq:result2,eq:result3}.
The claim that accumulation points are in $\cS$  follows from the continuity of $r(\cdot)$.
\end{proof}

The previous two lemmas require no H\"older continuity of $\nabla^2 f$ nor the H\"olderian error bound condition, and are therefore  valid even if the problem does not fall in the class that allows for
superlinear convergence in our previous analysis.

Next,  we show in \cref{lemma:unit2} that the unit
step $\alpha_t=1$ is eventually always accepted when the conditions of
\cref{thm:supermono} hold and $f$ is
convex, so that $Q$-superlinear convergence of $\{d_t\}$ and
$\{r_t\}$, defined in \cref{eq:def.dr}, is guaranteed, 
and the objective function converges superlinearly to its optimal value.
\cref{lemma:unit2} requires $\delta$ to be large enough such that $\|\tilde p_t\|^{2+\delta}$ is dominated by $d_t^{(q+1)/q}$ for all large $t$, or equivalently when $\tilde p_t$ is sufficiently small, where $\tilde p_t$ is the prox-Newton update step defined in \cref{eq:tildep}.
Noting the bound \cref{eq:pbmono}  (which, as noted above, applies to $\tilde p_t$ as well as to $p_t$), we need to select $\delta$ to satisfy
\begin{equation} \label{eq:so1}
(2+\delta) \min\Big\{ 1, 1 + p - \frac{\rho}{q} \Big\} > 1 + q^{-1}.
\end{equation}
It suffices for this inequality that the following two conditions hold:
\begin{subequations} \label{eq:delta}
    \begin{align} 
    \label{eq:delta.1}
        \delta &> q^{-1} - 1, \\
        \label{eq:delta.2}
	2+\delta &\geq \frac{q+1}{q}	(1+p)q = (1+p)(1+q).
    \end{align}
\end{subequations}
To confirm that \eqref{eq:delta.2} suffices, we have from \eqref{eq:ineqmono} that 
\[
(2+\delta) \left(1+p- \frac{\rho}{q}\right)  \ge (1+p)(1+q) \frac{1+pq-\rho}{q} >\frac{1+q}{q}.
\]
When $q \leq 1$, since $p  \in (0,1]$, the condition  \cref{eq:delta.2} is satisfied as long as $\delta \geq 2$.
For condition \eqref{eq:delta.1}, we see from \cref{ex:ratesmono} that when $p=1$ we have $q^{-1} < 8 / (-1+\sqrt{33})< 1.7$. (If $p < 1$, the lower bound for $q$ is larger, making $q^{-1}$ even smaller.)
Thus, it suffices for \eqref{eq:delta.1} that $\delta \geq 0.7$.
Setting $\delta = 2$ thus works for both conditions in \eqref{eq:delta}.

\begin{theorem}
Consider \cref{eq:f} and assume that the settings of \cref{thm:supermono} hold, with $A(x) = \nabla f(x)$ and $B(x) = \partial \Psi(x)$, and in particular that the quantities $\rho \ge 0$, $p \in (0,1]$ and $q > 0$ satisfy \cref{eq:ineqmono}.
Assume too that the settings of \cref{lemma:ls} hold and $f$ is convex.
Consider \cref{alg:newton2} and let $F^* \coloneqq \min F(x)$.
If $\delta>0$ satisfies $\norm{\tilde p_t}^{2+\delta} = o(d_t^{(q+1)/q})$, for all $t$ sufficiently large,
then there is $t_0 \geq 0$ such that $\alpha_t = 1$ is accepted for all $t \geq t_0$ and, for $s>0$ defined as in \cref{thm:supermono}, we have
\begin{equation}
\left\{
	\begin{aligned}
		d_{t+1} &= O\left( d_t^{1+s} \right),\\
		r_{t+1} &= O\left( r_t^{1+s} \right),\\
		F\left(x_{t+1}\right) - F^* &= O\big( \left(F\left( x_{t+1} \right)-
		F^*\right)^{1+s} \big),
	\end{aligned}
 \right.
\quad \forall t \geq t_0.
\label{eq:super2}
\end{equation}
\label{lemma:unit2}
\end{theorem}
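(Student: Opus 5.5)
The plan is to split the proof into a global phase and a local phase. In the global phase, \cref{lemma:global} gives $r_t\to 0$, since we may take $\delta\le 2$ and $F$ is bounded below by $F^*$. Hence for some $t_0$ every $x_t$ with $t\ge t_0$ lies in $V=\{x\mid r(x)\le\epsilon_1\}$, with $\epsilon_1$ as in the proof of \cref{thm:supermono}. In the local phase the tentative point $\tilde x_{t+1}$ satisfies \cref{eq:local,eq:stopmono}, because \cref{eq:stop} is stronger. So the estimates of \cref{lemma:ptmono,lemma:ytmono} and the proof of \cref{thm:supermono} apply verbatim to $y_{t+1}(1)=\tilde x_{t+1}$:
\[
r(\tilde x_{t+1}) = O\big(r_t^{1+s}\big),\qquad \dist(\tilde x_{t+1},\cS)=O\big(d_t^{1+s}\big),\qquad \norm{\tilde p_t}=O\big(d_t^{\min\{1,1+p-\rho/q\}}\big).
\]

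The core step is to show that $\alpha_t=1$ passes the test in Line 7. First I would derive an upper bound on $F(\bar x_{t+1}(1))-F^*$ in the style of \cite[Lemma~2.3]{BecT09a}. With $y=\tilde x_{t+1}$, $\bar y\in P_\cS(y)$ and $\bar x=y-G_L(y)/L$, the descent lemma, the subgradient inequality for $\Psi$ at $\bar x$ and convexity of $f$ give
\[
F(\bar x)-F^*\le \inprod{G_L(y)}{y-\bar y}-\tfrac{1}{2L}\norm{G_L(y)}^2\le \norm{G_L(y)}\,\dist(y,\cS).
\]
Next I would use the standard monotonicity of the prox-gradient step length in the step parameter, which gives $\norm{G_L(y)}\le \max\{1,L\}\, r(y)$. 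Together these yield $F(x_{t+1}^{\rm trial})-F^*=O(r(\tilde x_{t+1})\dist(\tilde x_{t+1},\cS))$, and this quantity is $O(d_t^{2+2s})$ after using $r_t\le (L+2)d_t$ from \cref{eq:Lipmono}.

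For the lower side I would invoke \cref{eq:rtosubgrad} together with \cite[Theorem~3.4]{MorO15a}. These turn the error bound \cref{eq:qmetricmono} plus convexity into the growth condition $F(x_t)-F^*=\Omega\big(d_t^{(1+q)/q}\big)$. Subtracting the two bounds, and using the hypothesis $\norm{\tilde p_t}^{2+\delta}=o(d_t^{(1+q)/q})$, gives
\[
F(x_t)-F(\bar x_{t+1}(1))-\gamma\norm{\tilde p_t}^{2+\delta}\ \ge\ \Omega\big(d_t^{(1+q)/q}\big)-O\big(d_t^{2+2s}\big)-o\big(d_t^{(1+q)/q}\big),
\]
which is nonnegative for large $t$ provided $2+2s>(1+q)/q$. \textbf{This exponent comparison is the step I expect to be the main obstacle.} It is not immediate from \cref{eq:ineqmono} alone. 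I would first try to sharpen the upper bound by estimating $\dist(\tilde x_{t+1},\cS)=O(r(\tilde x_{t+1})^q)$, which gives $O(r_t^{(1+s)(1+q)})$. I would then compare this against the growth bound rewritten in terms of $r_t$, using $d_t\ge r_t/(L+2)$, which gives $\Omega(r_t^{(1+q)/q})$. This reduces the requirement to $q(1+s)>1$. That condition is implied by the third inequality in \cref{eq:ineqmono} together with $(1+\rho)q>1$ and $(1+p)q>1$, after checking each of the three terms defining $s$. If a gap remains, I would fall back on mixing the $d_t$- and $r_t$-forms of the bounds.

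Once unit steps are accepted for all $t\ge t_0$, the rates follow. The map $y\mapsto \prox_{\Psi/L}(y-\nabla f(y)/L)$ is nonexpansive for convex $f$ and fixes $\cS$, so $d_{t+1}=\dist(\bar x_{t+1}(1),\cS)\le \dist(\tilde x_{t+1},\cS)=O(d_t^{1+s})$. Similarly, $r_{t+1}\le (L+2)d_{t+1}$ combined with the bound on $r(\tilde x_{t+1})$ and \cref{eq:qmetricmono} gives $r_{t+1}=O(r_t^{1+s})$; if needed I would instead bound $r(\bar x)$ directly by $O(r(y))$ through firm nonexpansiveness of the prox-gradient operator. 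Finally, the objective rate is read off by chaining
\[
F(x_{t+1})-F^*=O\big(r_t^{(1+s)(1+q)}\big)\quad\text{with}\quad F(x_t)-F^*=\Omega\big(r_t^{(1+q)/q}\big),
\]
which shows that the objective gap contracts superlinearly with exponent $1+s$ (up to the same exponent bookkeeping as above), i.e.\ the third relation in \cref{eq:super2} read with $F(x_t)-F^*$ on the right-hand side.
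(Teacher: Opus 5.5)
Your architecture matches the paper's: global convergence from \cref{lemma:global}, the Beck--Teboulle bound $F(\bar x_{t+1}(1))-F^*\le\norm{G_L(y)}\dist(y,\cS)$, the estimate $\norm{G_L}\le\max\{1,L\}r$, the growth bound from \cite{MorO15a}, and nonexpansiveness of the prox-gradient map for $d_{t+1}$. However, the step you flagged is a genuine gap for $q<1$, and your proposed resolution is wrong. Your first comparison needs $2+2s>(1+q)/q$, and your second needs $q(1+s)>1$; neither follows from \cref{eq:ineqmono}. Since $1+s\le(1+p)q\le 2q$, the condition $q(1+s)>1$ forces $q>1/\sqrt{2}$, whereas \cref{eq:ineqmono} allows $q$ down to about $0.593$. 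Concretely, $p=1$, $q=0.6$, $\rho=0.68$ satisfies \cref{eq:ineqmono} with $1+s=1.008$. This gives $q(1+s)\approx 0.605$ and $2+2s=2.016<8/3=(1+q)/q$. The loss comes from a round trip between $d_t$ and $r_t$. Your bound $r(\tilde x_{t+1})=O(r_t^{1+s})$ was obtained via $d_t\le\kappa r_t^q$. You then return to $d_t$ via $r_t\le(L+2)d_t$, or convert the lower bound with that same inequality, and this costs a factor $q$ in the exponent.

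The missing ingredient is to bound the new residual directly in terms of $d_t$, using \cref{eq:finalformmono}: $r(\tilde x_{t+1})=O\big(d_t^{1+\rho}+d_t^{1+p}+d_t^{(1+p-\rho/q)(1+p)}\big)=O\big(d_t^{(1+s)/q}\big)$. Then $F(\bar x_{t+1}(1))-F^*\le C_1\kappa\, r(\tilde x_{t+1})^{1+q}=O\big(d_t^{(1+s)(1+q)/q}\big)=o\big(d_t^{(1+q)/q}\big)$, which beats the growth lower bound for every admissible $q$.

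The same defect propagates to your rate statements. Chaining $O(r_t^{(1+s)(1+q)})$ with $\Omega(r_t^{(1+q)/q})$ only yields $F(x_{t+1})-F^*=O\big((F(x_t)-F^*)^{q(1+s)}\big)$, whose exponent can be below one. Combining $O(d_t^{(1+s)(1+q)/q})$ with $F(x_t)-F^*\ge\kappa_2 d_t^{(1+q)/q}$ instead gives the exponent $1+s$. Likewise, bounding $r_{t+1}$ through $d_{t+1}$ and \cref{eq:qmetricmono} gives only $O(r_t^{q(1+s)})$. Your fallback is the argument that actually works: $r(\bar x_{t+1})\le r(\tilde x_{t+1})+(2+L)\norm{\bar x_{t+1}-\tilde x_{t+1}}=O(r(\tilde x_{t+1}))$, as in \cref{eq:rtobound}.
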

\begin{proof}
For purposes of this proof, we use the abbreviations 
\[
\bar x_{t+1} \coloneqq \bar x_{t+1}(1), \quad y_{t+1} \coloneqq y_{t+1}(1).
\]
We first list two auxiliary results.
From \cite[Lemma~2.3]{BecT09a}, we have that
\begin{equation}
	F\Big( x -  \tfrac{1}{L} G_{L}(x) \Big) - F^* \leq
	\norm{G_{L}(x)}^2 \left(
	\frac{\dist(x,\cS)}{\norm{G_{L}( x)}} -
	\frac{1}{2{L}} \right), \quad \forall x \in \H.
\label{eq:pgbound}
\end{equation}
Moreover, \cite[Lemma~3]{TseY09a} indicates that there are constants $C_1 \geq
C_2 > 0$ such that $r(x)$ and the norm of $G_L(x)$ are related by
\begin{equation}
	C_1 r(x) \geq \norm{G_{L}(x)} \geq C_2 r(x), \quad \forall x.
\label{eq:RL}
\end{equation}
Now, using \cref{eq:pgbound} with
	$x=y_{t+1}$ and
$\bar{x}_{t+1} =
y_{t+1} - \tfrac{1}{L} G_{{L}}(y_{t+1})$
as in \cref{eq:xbar},
we have that
\begin{eqnarray}
\nonumber
F\left( \bar x_{t+1} \right) - F^* &\leq& \norm{G_{L}(y_{t+1})}
	\dist\left( y_{t+1}, \cS \right)\\
\nonumber
&\stackrel{\cref{eq:RL}}{\leq}& C_1 r(y_{t+1}) \dist\left( y_{t+1},
	\cS \right)\\
&\stackrel{\cref{eq:qmetricmono}}{\le}& C_1 \kappa r\left( y_{t+1}
	\right)^{1+q}.
\label{eq:ub}
\end{eqnarray}
When $\alpha_t = 1$, we have $y_{t+1} = \tilde x_{t+1}$.
Therefore, we can apply \cref{eq:finalformmono} to \eqref{eq:ub} and obtain
\begin{equation}
F( \bar x_{t+1} ) - F^* \le
O\Big( d_t^{(1+q)\min\left\{ 1+\rho, 1+p,\left( 1 + p -
	\frac{\rho}{q} \right)(1+p) \right\}} \Big) = O\big( d_t^{(1+s)(1+q)/q} \big),
\label{eq:ub2}
\end{equation}
where we used the definition of $s$ from \cref{thm:supermono}.
On the other hand, \cref{eq:rtosubgrad} (due to convexity of $f$) and \cref{eq:qmetricmono} imply that there is $\epsilon_2 > 0$ such that
\begin{equation*}
\dist\left(x,\cS\right) \leq \kappa \norm{z}^q, \quad
\forall z \in \partial F(x),\quad \forall x \in  \left\{x\mid
\dist\left(x , \cS\right) \le \epsilon_2 \right\},
\end{equation*}
which together with convexity of $F$ implies that
\begin{equation}
F(x) - F^* \leq \kappa \min_{z \in \partial F(x)} \norm{z}^{1+q},
\quad \forall x \in  \left\{x\mid \dist\left(x , \cS\right) \le
\epsilon_2 \right\}.
\label{eq:subdiff}
\end{equation}
We then apply \cite[Theorem~3.4]{MorO15a} to \cref{eq:subdiff} to
conclude that there is $\kappa_2 > 0$ such that
\begin{equation}
	F(x) - F^* \geq \kappa_2 \dist(x, \cS)^{\frac{1+q}{q}}
	\label{eq:sharp}
\end{equation}
holds for all $x$ close enough to $\cS$.
Since $\norm{\tilde p_t}^{2+\delta} = o(d_t^{(1+q)/q})$, we have from
\cref{eq:sharp} that
\begin{equation}
	F\left( x_t \right) - F^* - \sigma \norm{\tilde p_t}^{2+\delta} \geq
	\Omega\Big(d_t^{\frac{1+q}{q}}\Big) + o\Big(
	d_t^{\frac{1+q}{q}} \Big) = \Omega\Big(d_t^{\frac{1+q}{q}}\Big).
\label{eq:lb}
\end{equation}
By comparing \cref{eq:ub2} and \cref{eq:lb}, and using the fact that \cref{eq:ineqmono} implies  $s>0$, we have for $d_t$ sufficiently small that
\begin{equation*}
	F\left( \bar x_{t+1} \right) - F^* = o\big( F(x_t) - F^* - \sigma
	\norm{\tilde p_t}^{2+\delta}\big).
\end{equation*}
Therefore, from \cref{eq:globalconv} and \cref{eq:qmetricmono}, we have for all sufficiently large $t$ that
\begin{equation*}
	F\left( \bar x_{t+1} \right) - F(x_t) + \sigma
	\norm{\tilde p_t}^{2+\delta} =
	F( \bar x_{t+1} ) - F^* - \big(F(x_t) - F^* - \sigma
	\norm{\tilde p_t}^{2+\delta}\big) \le 0,
\end{equation*}
meaning that the unit step size is accepted and we have $x_{t+1} = \bar{x}_{t+1}$.
From convexity of $f$ and \cite[(34)]{LeeW19a}, a short proximal-gradient step does not increase distance to the solution, so we have  $d_{t+1} =\dist\left(x_{t+1},\cS\right) = \dist\left( \bar
x_{t+1}, \cS \right) \leq \dist\left( y_{t+1},\cS \right)$. 
We can now apply \cref{eq:superconvmono}, noting that $d_{t+1}$ in that bound corresponds to $\dist(y_{t+1},\cS)$ here because
$\alpha_t = 1$ (while our $d_{t+1}$ in this section corresponds to $\dist(x_{t+1}, \cS)$), to obtain
\begin{equation}
	\underbrace{\dist(x_{t+1}, \cS)}_{d_{t+1} \text{ in this section}} \leq \underbrace{\dist(y_{t+1}, \cS)}_{d_{t+1} \text{in \cref{eq:superconvmono}}} = O(d_t^{1+s}) = o(d_t).
    \label{eq:d_decrease}
\end{equation}
This proves the first claim in \cref{eq:super2}.
When $d_t$ is small enough, we further have from \cref{eq:d_decrease} that $d_{t+1} \leq d_t$ so that  superlinear
convergence and acceptance of the unit step propagates to the next
iteration.
We note that there must be $t$ such that $d_t$ is small enough to satisfy our requirement according to \cref{lemma:global} and
\cref{eq:qmetricmono}.

Superlinear convergence of the objective follows from \cref{eq:sharp} and \cref{eq:ub2}. 
In particular, using $x_{t+1}=\bar{x}_{t+1}$ in \cref{eq:ub2}, we see that
\[
F(x_{t+1}) - F^* \leq  O\left( d_t^{(1+s) (1+q) / q} \right)
	\stackrel{\cref{eq:sharp}}{=} O\big(\big( F(x_t) -
	F^*\big)^{1+s} \big).
\]

Finally, to prove the convergence rate for $r_t$, we use the definition  \cref{eq:defR3} and  nonexpansiveness of $\prox_\Psi$ to obtain
\begin{align}
\nonumber
&~ \norm{R\left( x_{t+1} \right) - R\left( y_{t+1} \right)} \\
\nonumber
=&~ \norm{(x_{t+1}-y_{t+1}) - \left(\prox_{\Psi}(x_{t+1} - \nabla f(x_{t+1})) - \prox_{\Psi}(y_{t+1} - \nabla f(y_{t+1})) \right)} \\
\nonumber
\leq&~ \norm{(x_{t+1} - y_{t+1}}  + \norm{(x_{t+1} - \nabla f(x_{t+1}))-(y_{t+1} - \nabla f(y_{t+1})) } \\
\leq&~ (2+L)\norm{x_{t+1} - y_{t+1}}.
\label{eq:rtobound}
\end{align}
Using  from \cref{eq:xbar} that $x_{t+1} = \bar{x}_{t+1} = y_{t+1} - (1/{L}) G_{{L}} (y_{t+1})$, and using \cref{eq:RL},
we can bound $\norm{x_{t+1} - y_{t+1}}$ by
\begin{equation*}
	\norm{x_{t+1} - y_{t+1}} = \frac{1}{{L}}
	\norm{G_{{L}}(y_{t+1})}
	\in \left[\frac{C_2}{{L}} r\left(y_{t+1}\right),
		\frac{C_1}{{L}}
	r\left(y_{t+1}\right)\right],
\end{equation*}
where $C_1 \ge C_2 >0$.
By substituting  into \cref{eq:rtobound}, we obtain
\begin{align*}
	r_{t+1} &= \norm{R(x_{t+1})} \leq r( y_{t+1}) + (2+L) \norm{x_{t+1} - y_{t+1}}\\
	&\leq \Big(1 + \frac{(2+L)C_1}{{L}}\Big)r( y_{t+1} )\\
	&= O\left( r(y_{t+1}) \right) \\
	&= O\left( r_t^{1+s} \right),
\end{align*}
(where the last step is from \cref{eq:superconvmono} in \cref{thm:supermono}), proving \cref{eq:super2}.
With the same argument for $d_t$, we see that \cref{eq:qmetricmono}
continues to hold for the next iterate when $r_t$ is small enough to
ensure $r_{t+1} \leq r_t$.
\end{proof}

Finally, we show that when $q=1$ in \cref{eq:qmetricmono}, our linesearch is versatile enough to accept the unit step size for \emph{any} update directions that yield $Q$-superlinear convergence of both $d_t$ and $r_t$, regardless of how these directions are generated.
\begin{corollary}
    \label{cor:unit3}
    Consider \cref{eq:f} and
assume that the settings of \cref{lemma:ls} hold, $f$ is convex, and \cref{eq:qmetricmono} holds with $q=1$.
Consider \cref{alg:newton2} but with $\{\tilde x_{t+1}\}$ generated in 
an arbitrary manner that satisfies
\begin{equation}
        \dist(\tilde x_{t+1}, \cS) = o(d_t) \quad \text{and} \quad r(\tilde x_{t+1}) = o(r_t).
        \label{eq:fast_cand}
    \end{equation}
    Assume too that the parameter $\delta > 0$ in \cref{alg:newton2} satisfies $\|\tilde p_t\|^{2+\delta} = o(d_t^2)$.
Then for all $t$ sufficiently large, we have $\alpha_t = 1$, and we have
\[
d_{t+1} = o(d_t),\quad r_{t+1} = o(r_t),\quad F(x_{t+1}) - F^* = o(F(x_t) - F^*).
\]
\end{corollary}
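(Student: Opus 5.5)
We follow the proof of \cref{lemma:unit2}, replacing every use of \cref{eq:finalformmono} and \cref{eq:superconvmono} by the abstract hypothesis \cref{eq:fast_cand}. With $q=1$, \cref{eq:sharp} reads $F(x)-F^*\ge \kappa_2 d(x)^2$ near $\cS$. The bound \cref{eq:ub} needs only convexity, the error bound and \cref{eq:RL}. When $\alpha_t=1$ we have $y_{t+1}=\tilde x_{t+1}$, and \cref{eq:ub} gives
\[
F(\bar x_{t+1})-F^*\le C_1 r(\tilde x_{t+1})\,\dist(\tilde x_{t+1},\cS).
\]
By \cref{eq:fast_cand} and \cref{eq:Lipmono} ($r_t\le (L+2)d_t$), the right-hand side is $o(r_t)\,o(d_t)=o(d_t^2)$.

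For the lower bound, the hypothesis $\|\tilde p_t\|^{2+\delta}=o(d_t^2)$ together with \cref{eq:sharp} gives $F(x_t)-F^*-\gamma\|\tilde p_t\|^{2+\delta}=\Omega(d_t^2)$. Comparing the two bounds, $F(\bar x_{t+1}(1))\le F(x_t)-\gamma\|\tilde p_t\|^{2+\delta}$ once $d_t$ is small, so the unit step is accepted. We need $d_t$ (equivalently $r_t$) to eventually become small. Since the direction is now arbitrary, \cref{lemma:global} may not apply directly. I would simply note that the $o(\cdot)$ statements in \cref{eq:fast_cand} are understood along a sequence converging to $\cS$, i.e.\ that $d_t\to0$ is implicit in the hypothesis. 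This bookkeeping point is the main obstacle, and I would state it explicitly as an assumption of the asymptotic regime. After that, the argument is a direct induction: acceptance at step $t$ gives $d_{t+1}<d_t$, so the error bound region is retained and acceptance propagates.

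For the rates:
\begin{itemize}
\item \textbf{Distance.} As in \cref{eq:d_decrease}, the short proximal-gradient step does not increase the distance \cite[(34)]{LeeW19a}, so $d_{t+1}\le \dist(\tilde x_{t+1},\cS)=o(d_t)$.
\item \textbf{Residual.} As in \cref{eq:rtobound}, $r_{t+1}\le (1+(2+L)C_1/L)\,r(\tilde x_{t+1})=o(r_t)$.
\item \textbf{Objective.} We have $F(x_{t+1})-F^*=o(d_t^2)$. By \cref{eq:sharp}, $d_t^2\le \kappa_2^{-1}(F(x_t)-F^*)$, hence $F(x_{t+1})-F^*=o(F(x_t)-F^*)$.
\end{itemize}
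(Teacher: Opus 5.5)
Your proposal is correct and follows the paper's proof essentially step for step. The steps are the $o(d_t^2)$ upper bound from \cref{eq:ub} combined with \cref{eq:fast_cand} and $r_t\le (L+2)d_t$, the $\Omega(d_t^2)$ lower bound from \cref{eq:sharp}, and then the distance, residual, and objective rates argued as in \cref{lemma:unit2}.

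The issue you flag ($d_t\to0$) is also left implicit in the paper, which inherits from \cref{lemma:global} an appeal that does not strictly apply to arbitrary directions. You can avoid assuming it. The bound $F(\bar x_{t+1}(1))-F^*\le C_1 r(\tilde x_{t+1})\dist(\tilde x_{t+1},\cS)$ holds globally. Convexity extends \cref{eq:sharp}, with a smaller constant, to the sublevel set $\{x : F(x)\le F(x_0)\}$, which contains all iterates. Hence the unit step is accepted for all large $t$, and $d_{t+1}=o(d_t)$ then forces $d_t\to0$.
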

\begin{proof}
The proof mainly follows the argument for \cref{lemma:unit2} with a few differences noted here.
First, due to \cref{eq:fast_cand} and $q=1$, and noting that $r_t = O(d_t)$ from \eqref{eq:Lipmono}, \cref{eq:ub2} becomes
\[
F(\bar x_{t+1}) - F^* \leq o(d_t^2).
\]
On the other hand, our assumption that $\|\tilde{p}_t\|^{2+\delta} = o(d_t^2)$ and \cref{eq:sharp} with $q=1$ imply \cref{eq:lb} with the right-hand side being $\Omega(d_t^2)$.
By comparing with the inequality above, we conclude that the unit step size is accepted for $t$ sufficiently large.
The claim of superlinear convergence then follows by the same reasoning as in the remainder of the argument in  \cref{lemma:unit2}, with $d_t^{1+s}$, $(F(x_t) - F^*)^{1+s})$, and $r_t^{1+s}$ replaced by $o(d_t)$,  $o(F(x_t) - F^*)$, and $o(r_t)$, respectively.
\end{proof}

From the proofs of \cref{lemma:unit2,cor:unit3}, we can see that
the analysis relies solely on the convergence rate of the update direction rather than the existence or continuity of the Hessian  $\nabla^2 f$.
Therefore, our novel line search serves as a general framework that is also compatible with any  ``fast directions'' obtained beyond the proximal Newton approach analyzed in \cref{thm:supermono,cor:cont}, such as through a (proximal) semismooth Newton or a proximal quasi-Newton approach, to simultaneously ensure strict objective decrease and eventual unit step size acceptance for fast convergence.

\section{Simplification For Smooth Problems}
\label{sec:simple}

Our algorithm can be simplified for the case of convex smooth optimization --- the setting of \Cref{sec:global} with  $\Psi \equiv 0$, that is, $F(x) = f(x)$.
We have in this scenario that $R(x) = \nabla f(x)$ and $G_L(x) = \nabla f(x)$ for any $L > 0$, and the bound \cref{eq:pgbound} simplifies to
\begin{equation}
F(x) - F^* \leq \norm{\nabla f(x)}\dist(x, \cS)
\label{eq:easy}
\end{equation}
from convexity.
Therefore, the proximal gradient step can be removed from \cref{alg:newton2} without affecting the bounds on the objective value.
The simplified method is shown as \cref{alg:newton1}.

\begin{algorithm}[tbh]
\DontPrintSemicolon
\SetKwInOut{Input}{input}\SetKwInOut{Output}{output}
\caption{A Simple Newton Method for Degenerate Problems}
\label{alg:newton1}
\Input{$x_0\in\H$, $\beta,\gamma\in (0,1)$, $\nu \in [0,1)$, $c
> 0, \rho \in (0,1]$, $\delta \geq 0$}

\For{$t=0,1,\ldots$}{
Select $H_t$ satisfying \cref{eq:subprob,eq:B} and find an approximate solution $\tilde x_{t+1}$
of \cref{eq:newton} (with $\Psi = 0$) satisfying \cref{eq:stop}

	$\tilde p_t \leftarrow \tilde x_{t+1} - x_t$, $\alpha_t \leftarrow 1$

	\While{ $F(x_t + \alpha_t \tilde p_t) > F(x_t) - \gamma
	\alpha_t^2 \norm{\tilde p_t}^{2+\delta}$}
	{
			$\alpha_t \leftarrow \beta \alpha_t$
	}
	
	$x_{t+1} \leftarrow x_t + \alpha_t \tilde p_t$
}
\end{algorithm}

Clearly, this is identical to the classical truncated Newton method except for the addition of a damping term to the quadratic approximation and the slightly unconventional step size acceptance criterion.
As the proofs of \cref{lemma:ls,lemma:global} do not involve any specific properties of the proximal gradient step, we see that they are still applicable to \cref{alg:newton1}.
The local convergence result is as follows.
\begin{corollary}
Assume that $\Psi \equiv 0$ and $f \in \mathcal{C}^2$ is convex and $L$-Lipschitz-continuously
differentiable in \cref{eq:f} with $\nabla^2 f$
$p$-H\"older continuous in a neighborhood $U$ of $\cS$, and within $U$, \cref{eq:qmetricmono} holds for some
$\kappa > 0$ and some $q \in (0,1]$.
Then for \cref{alg:newton1}, we have that \cref{eq:lsbound} and \cref{eq:globalconv} both hold.
Further, if \cref{eq:ineqmono} is satisfied, then there is $t_0 \geq 0$ such that
$\alpha_t = 1$ for all $t \geq t_0$, and \cref{eq:super2} holds.
\label{cor:smooth}
\end{corollary}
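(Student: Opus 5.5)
The plan is to treat \cref{alg:newton1} as \cref{alg:newton2} with $\Psi \equiv 0$ and the proximal-gradient correction removed. In this setting $R(x)=\nabla f(x)$, $\prox_\Psi$ is the identity, and $r(x)=\norm{\nabla f(x)}$. So every step of the earlier analysis that does not involve $\bar x_{t+1}$ carries over verbatim.

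\textbf{Global part.} I would first verify \cref{eq:lsbound} and \cref{eq:globalconv}. In the proof of \cref{lemma:ls}, the only use of the prox-gradient step was the inequality \cref{eq:pg}, $F(\bar x_{t+1}(\alpha_t))\le F(y_{t+1}(\alpha_t))$. Here the candidate is $y_{t+1}(\alpha_t)=x_t+\alpha_t\tilde p_t$ itself, so this inequality is replaced by equality and is not needed. The descent estimate \cref{eq:lsb1} follows from $L$-smoothness and $q_t(\tilde x_{t+1})\le q_t(x_t)$ with $\Psi=0$, which gives \cref{eq:lsbound}. The proof of \cref{lemma:global} uses only the acceptance test, \cref{lemma:ls}, and \cref{lem:rtbound}. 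All of these remain valid, and $F\ge F^*>-\infty$ because $\cS\ne\emptyset$. This gives \cref{eq:globalconv}. Here I would assume, as in the lemma, that $\theta\le1$ and $\delta\le2$; these are implicit in the input $\rho\in(0,1]$ and the choice of $\delta$.

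\textbf{Local part.} I would mirror the proof of \cref{lemma:unit2} with $x_{t+1}=y_{t+1}=\tilde x_{t+1}$ when $\alpha_t=1$. The upper bound \cref{eq:ub} is replaced by \cref{eq:easy}:
\[
F(\tilde x_{t+1})-F^*\le r(\tilde x_{t+1})\,\dist(\tilde x_{t+1},\cS)\le \kappa\, r(\tilde x_{t+1})^{1+q},
\]
which holds once $r(\tilde x_{t+1})\le\epsilon$. Combined with \cref{eq:finalformmono}, this yields \cref{eq:ub2}, i.e. $O(d_t^{(1+s)(1+q)/q})$. For the lower bound, convexity together with $r(x)=\norm{\nabla f(x)}=\dist(0,\partial F(x))$ gives \cref{eq:subdiff}, and \cite[Theorem~3.4]{MorO15a} then gives the growth bound \cref{eq:sharp}. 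It remains to check that $\norm{\tilde p_t}^{2+\delta}=o(d_t^{(1+q)/q})$. I would take this from \cref{eq:pbmono} together with a choice of $\delta$ satisfying \cref{eq:delta}, for instance $\delta=2$ as argued before \cref{lemma:unit2}. I read the corollary as implicitly assuming such a $\delta$, as in \cref{lemma:unit2}. Comparing the two bounds shows that the unit step is accepted once $d_t$ is small.

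Once the unit step is accepted, $x_{t+1}=\tilde x_{t+1}$, so \cref{thm:supermono} applies directly. It gives $d_{t+1}=O(d_t^{1+s})$ and $r_{t+1}=O(r_t^{1+s})$; unlike in \cref{lemma:unit2}, no nonexpansiveness argument for the prox step is needed. The objective rate follows from \cref{eq:ub2} and \cref{eq:sharp}, exactly as before.

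\textbf{Main obstacle.} The delicate point is the induction that the iterates eventually enter, and then stay in, the region where \cref{eq:qmetricmono}, \cref{eq:sharp} and \cref{thm:supermono} all hold. I would handle it as in \cref{lemma:unit2}. First, \cref{eq:globalconv} gives some $t_0$ with $r_{t_0}$ small, and then $d_{t_0}\le\kappa r_{t_0}^q$ is small as well. From there, the inequalities $r_{t+1}\le\frac{99}{100}r_t$ and $d_{t+1}\le d_t$ from \cref{thm:supermono} keep all later iterates in that region, so unit-step acceptance propagates to every subsequent iteration.
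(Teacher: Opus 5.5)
Your proposal is correct and is essentially the paper's own argument. You reuse \cref{lemma:ls} and \cref{lemma:global} with $x_{t+1}=y_{t+1}$, replace \cref{eq:pgbound} by \cref{eq:easy} to get $F(x)-F^*\le\kappa\norm{\nabla f(x)}^{1+q}$ and hence \cref{eq:sharp} via \cite[Theorem~3.4]{MorO15a}, and then rerun the proof of \cref{lemma:unit2}, where \cref{thm:supermono} applies directly because no prox-gradient correction is taken.

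You are more explicit than the paper about the implicit hypotheses ($\delta\le 2$, and a $\delta$ such as $\delta=2$ satisfying \cref{eq:delta}). One correction: $\theta\le 1$ is not implied by $\rho\le 1$, since \cref{eq:B} only bounds $\theta$ from below, so it is an extra assumption inherited from \cref{lemma:global}. You could in fact drop it here: monotonicity of $F(x_t)$ and $L$-smoothness give $r_t^2=\norm{\nabla f(x_t)}^2\le 2L(F(x_0)-F^*)$, so $\{r_t\}$ is bounded without it.
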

\begin{proof}
The claims for  \cref{eq:lsbound} and \cref{eq:globalconv} follow directly from the same reasoning as before, noting that $x_{t+1} = y_{t+1}$.
For the superlinear convergence claim, we see from \cref{eq:qmetricmono} (recalling that $R(x) = \nabla f(x)$ in this case) and \cref{eq:easy} that the following condition holds.
\begin{equation}
	F(x) - F^* \leq \kappa \norm{\g}^{1+q}.
	\label{eq:KL}
\end{equation}
Theorem~3.4 of \cite{MorO15a} then indicates that \cref{eq:sharp} holds for some $\kappa_2 > 0$ near $\cS$.
Finally, using the argument in the proof of \cref{lemma:unit2}, with \cref{eq:pgbound} replaced by \cref{eq:easy} for $x_{t+1} = y_{t+1}$,
the desired results in \cref{eq:super2} follow.
\end{proof}

Recent works \cite{Mis21a,DoiN21a} consider a specific damping in the form of \cref{eq:subprob} with $\rho = 1/2$ and $J_t = \nabla^2 f(x_t)$ for unconstrained convex optimization.
Under the assumption that $\nabla^2 f$ is globally $H$-Lipschitz continuous, these two works showed that by fixing $c$ to a specific value in \cref{eq:subprob} and solving the subproblem exactly, $\alpha_t \equiv 1$ can be used and global convergence of the objective value to the optimum is guaranteed, with a speed of $O(t^{-2})$.
They also showed local superlinear convergence under an additional global strong convexity condition (global strong convexity, continuous Hessian, and a descent algorithm imply that the gradient is Lipschitz continuous in the region of interest).
However, we notice that their analyses utilized the second-order Taylor approximation of $f$ and the Lipschitz continuity of the Hessian, and thus are not applicable to the problem we consider here, where we do not assume Lipschitz continuity of  $\nabla^2 f$.
Therefore, \cref{cor:smooth} extends the problem class on which their damped truncated Newton leads to superlinear convergence to non-strongly-convex ones with non-Lipschitz Hessians.

\section{Conclusions}
We have examined inexact, damped, proximal Newton-like methods for solving degenerate regularized optimization problems, and their extension to the generalized equations setting.
We have described algorithms that achieved superlinear convergence even in the presence of nonconvexity of the smooth term and singularity of its Hessian (or nonmonotonicity, and singularity of the Jacobian, in the case of generalized equations).
Moreover, we show that the standard assumptions of Lipschitz
continuity of the Hessian (or Jacobian) and the Lipschitz error bound can be relaxed to H\"olderian
\ifdefined\arxiv
ones,
and we can further relax the H\"older continuity assumption of the
Hessian (or Jacobian) to uniform continuity.
\else
ones.
\fi
These results require careful choices of the parameters that govern the damping and the measure of inexactness in the solution of each subproblem.

\ifdefined\arxiv
\else
We can show, in a result not included here (see the arXiv version of
this paper for details in \cite[Appendix~A]{LeeW26a}), that the H\"older
continuity assumptions of
the Hessian (or Jacobian) can
be relaxed further to uniform continuity, provided certain additional
conditions on the choice of parameters and the inexactness conditions
are satisfied.
\fi

\section*{Acknowledgements}
We thank Defeng Sun for fruitful discussion and pointing out an error in an early draft.

\bibliographystyle{siamplain}
\bibliography{header-arxiv,degeneratePN}
\appendix

\ifdefined\arxiv
\section{Relaxation from $p$-H\"older Continuity of the Hessian to
Uniformly Continuous}
\label{app:unicont}
This appendix shows that the $p$-H\"older assumption
\cref{eq:DAholder} on $\nabla A$ in \cref{sec:local} can be further
relaxed.
In particular, for the case of $q=1$, superlinear convergence can be
preserved even when the Jacobian is merely \emph{uniformly
continuous}, provided that the damping and the stopping tolerance
decay slowly enough.
For this result, we note that for any function $f$ uniformly
continuous in a convex set $V$, it admits a modulus of continuity
$\omega: [0,\infty) \rightarrow [0,\infty)$ such that
\begin{equation}
    \lim_{s \downarrow 0}\, \omega(s) = \omega(0) = 0,\quad
    \norm{f(x) - f(y)} \leq \omega(\norm{x-y}),\quad \forall x,y \in V.
    \label{eq:modulus}
\end{equation}
It is known that we can always select an $\omega$ that is
continuous, monotonically increasing, and subadditive.
See, for example, \cite[Chapter~2, Section~6]{DevL93a}.

Subadditivity and monotonicity of $\omega$ play a crucial role in our analysis below.
More specifically, subadditivity and monotonicity of $\omega$ imply that
\begin{equation}
	f(t) = O(g(t)) \quad \Rightarrow \quad
	\omega(f(t)) = O(\omega(g(t)).
	\label{eq:subadd}
\end{equation}
Indeed,
by definition, $f(t) = O(g(t))$ means there is $\beta > 0$ such
that $f(t) \leq \beta g(t)$ for all $t$ large, and thus monotonicity of
$\omega$ implies that
\[
	\omega(f(t)) \leq \omega(\beta g(t)) \leq
\omega(\lceil \beta \rceil g(t)),
\]
while subadditivity of $\omega$
indicates
\[
\omega(\lceil \beta \rceil g(t))\leq \lceil \beta \rceil
\omega(g(t)).
\]
These two inequalities in combination then lead to
\[
\omega(f(t)) \leq \lceil \beta \rceil \omega(g(t)) =
O(\omega(g(t)),
\]
which is exactly \cref{eq:subadd}.
\begin{corollary}
\label{cor:cont}
Consider \cref{eq:fmono} with the same assumptions as
\cref{thm:supermono}, except that \cref{eq:qmetricmono} holds with $q
= 1$ and $\nabla A$ is only uniformly continuous in $V$.
Let $\omega_1$ denote the nondecreasing and subadditive modulus of continuity of $\nabla A$.
Consider the update scheme \cref{eq:local}, but with $\mu_t$, $J_t$, and the stopping condition \cref{eq:stopmono} replaced by
\begin{equation}
    \mu_t = c\, \omega_2(r_t),\quad \norm{J_t - \nabla A(x_t)} = O(\omega_2(r_t)), \quad \hat r_t(x_{t+1}) \leq \nu \omega_2(r_t)r_t
    \label{eq:damp}
\end{equation}
for some $\nu \geq 0$ and some given nondecreasing, subadditive, and continuous function $\omega_2: [0,\infty)
	\rightarrow [0,\infty)$ that vanishes at zero.
If the problem satisfies
\begin{equation}
\omega_1(s) \leq \beta_1 \omega_2(s)
\label{eq:omegas}   
\end{equation}
for some $\beta_1 > 0$ and all $s \geq 0$,
then, provided $r_0$ is sufficiently small, we have
$r_{t+1} = o(r_t)$, $d_{t+1} = o(d_t)$, and $\{x_t\}$ converges strongly to a point in the solution set $\cS$.
\end{corollary}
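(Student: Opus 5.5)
We mirror the proof of \cref{lemma:ptmono}, \cref{lemma:ytmono}, and \cref{thm:supermono}. Throughout we replace the power $r_t^\rho$ by $\omega_2(r_t)$ and the H\"older remainder $O(\norm{x-y}^{1+p})$ by the integral-form Taylor bound
\[
\norm{A(x)-A(y)-\nabla A(y)(x-y)} \le \omega_1(\norm{x-y})\norm{x-y} \le \beta_1\omega_2(\norm{x-y})\norm{x-y},
\]
which replaces \cref{eq:smoothpmono}.

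\textbf{Step 1: bound on the step.} Repeat the monotonicity argument \cref{eq:lower}--\cref{eq:firsttermmono} with $\bxt\in P_\cS(x_t)$. The term $\norm{(H_t-\nabla A(x_t))(\bxt-x_t)}$ is $O(\omega_2(r_t) d_t) = O(\mu_t d_t)$, and the Taylor term is $O(\omega_1(d_t) d_t)$. The inexactness gives $\norm{\xi_t}\le \nu\omega_2(r_t) r_t$, and $\norm{H_t}=O(1)$. Dividing by $\mu_t$ yields
\[
\norm{p_t} \le O(d_t) + O\big(\omega_1(d_t) d_t/\omega_2(r_t)\big) + O(r_t).
\]
With $q=1$ we have $d_t\le\kappa r_t$ and $r_t\le (L+2)d_t$ by \cref{eq:Lipmono}. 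Hence \cref{eq:subadd} and \cref{eq:omegas} give $\omega_1(d_t)\le \beta_1\omega_2(d_t) = O(\omega_2(r_t))$. This is the step where subadditivity and monotonicity are essential. It gives $\norm{p_t}=O(d_t)$.

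\textbf{Step 2: residual recursion.} Follow \cref{eq:trimono}--\cref{eq:rsuper}:
\[
r_{t+1} \le \omega_1(\norm{p_t})\norm{p_t} + O(\mu_t)\norm{p_t} + \nu\omega_2(r_t) r_t .
\]
Using $\norm{p_t}=O(d_t)=O(r_t)$ and \cref{eq:subadd} again, each term is $O(\omega_2(r_t)\, r_t)$. Since $\omega_2(r_t)\to 0$ as $r_t\to0$, this gives $r_{t+1}=o(r_t)$ locally.

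\textbf{Step 3: keeping the iterates in $V$.} Shrink $\epsilon_1$ so that $r_{t+1}\le \tfrac{99}{100} r_t$ whenever $r_t\le\epsilon_1$. Then all iterates stay in $V$ and we may induct.

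\textbf{Step 4: distance and strong convergence.} The rate for the distance follows from $d_{t+1}\le\kappa r_{t+1}=O(\omega_2(r_t)r_t)=O(\omega_2(r_t)\, d_t)=o(d_t)$, using $r_t=O(d_t)$. For strong convergence, copy \cref{thm:iterate}: $\norm{p_t}=O(r_t)$ and $r_t$ decays at least geometrically, so $\sum_t \norm{p_t}<\infty$. Thus $\{x_t\}$ is Cauchy, and its limit lies in $\cS$ by closedness.

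\textbf{Main obstacle.} The main difficulty is ensuring that the neighborhood hypotheses hold for both $x_t$ and $x_{t+1}$ when applying the Taylor bound. This requires the segment between them to lie where uniform continuity holds, i.e.\ some convexity of $V$ or an enlarged neighborhood. I would handle it as the earlier lemmas implicitly do: use $\norm{p_t}=O(r_t)$, which is small, and assume uniform continuity on a convex neighborhood containing $V$. A second, technical obstacle is that $\omega$-comparisons must go through \cref{eq:subadd} rather than algebraic exponents.
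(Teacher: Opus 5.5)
Your proposal is correct and follows the paper's proof essentially step for step. The shared steps are the integral Taylor bound with $\omega_1$; the estimate from \eqref{eq:lower} giving $\norm{p_t}=O(d_t)$ via $\omega_1(d_t)\le\beta_1\omega_2(d_t)=O(\omega_2(r_t))$ and \eqref{eq:subadd}; the recursion $r_{t+1}=O(\omega_2(r_t)r_t)=o(r_t)$ with $d_{t+1}=o(d_t)$ from $q=1$; and the summability argument of \cref{thm:iterate}. Your explicit induction keeping the iterates in $V$, and your caveat that the Taylor segments must lie in a convex region of uniform continuity, are points the paper leaves implicit (it only introduces the modulus on a convex set), so they add care rather than change the approach.
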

\begin{proof}
We use the same notations as \cref{eq:def.dr} such that
\[
d_t \coloneqq \dist(x_t,\cS),\quad r_t \coloneqq r(x_t),\quad p_t \coloneqq x_{t+1} - x_t,
\]
and $\bar x_t \in P_{\cS}(x_t)$, where $\cS$ is the set of solutions.
    We observe first from \cref{eq:modulus} that uniform continuity of
	$\nabla A$ implies that
     \begin{align}
		 \nonumber
         \norm{A(x) - A(y) - \nabla A(y)(x-y)}  & = 
     \norm{\int_0^1 \nabla A(y + t(x-y)) (x-y) \, dt - \nabla A(y) (x-y)}  \\
		 \nonumber
     & \le \int_0^1 \omega_1(t\|x-y\|) \, dt \, \norm{x-y}  \\
     & \le \omega_1(\norm{x-y})\norm{x-y}.
    \label{eq:unicont}
     \end{align}
    Second, since $q  = 1$, \cref{eq:qmetricmono,eq:Lipmono} indicate $r_t = O(d_t)$ and $d_t = O(r_t)$.
From the definitions \eqref{eq:residual} and \eqref{eq:rtmono}, we have $\| \xi_t \| = \hat{r}(x_{t+1})$ and so  \cref{eq:damp} implies that $\norm{\xi_t} \le \nu \omega_2(r_t) r_t$. 
Since $\omega_2(r_t) \to 0$ as $r_t \to 0$ and $r_t = O(d_t)$, we have
    \begin{equation}
        \norm{\xi_t} = o(r_t) = o(d_t).
        \label{eq:xi_bound}
    \end{equation}

    We now find a bound on  $\|\bar x_t - x_{t+1} + \xi_t\|$, using \eqref{eq:lower} again in a similar way to how we derived \eqref{eq:firsttermmono}. 
    Substituting the parameter choices from \cref{eq:damp} into \eqref{eq:lower}, and using \cref{eq:unicont} with $x=\bar x_t$ and $y=x_t$, we obtain: %
    \begin{align*}
        & \norm{\bar x_t - x_{t+1} + \xi_t} \\
        \le &~ \mu_t^{-1} \Big( \norm{(J_t - \nabla A(x_t))(\bar x_t - x_t)} + \norm{A(x_t) - A(\bar x_t) - \nabla A(x_t)(x_t - \bar x_t)} \\
        &~ \qquad + \mu_t \norm{\bar x_t - x_t} + (1 + \norm{H_t})\norm{\xi_t} \Big) \\
        \le &~ \frac{1}{c \omega_2(r_t)} \Big( O(\omega_2(r_t)) d_t + \omega_1(d_t) d_t + c \omega_2(r_t) d_t + O(1) \cdot \nu \omega_2(r_t) r_t \Big).
    \end{align*}
    In the last inequality, we used $\norm{H_t} = O(1)$, which is from 
    \begin{eqnarray}
    \label{eq:Hbound}
    \norm{H_t} &\stackrel{\cref{eq:local},\cref{eq:damp}}{=}
	&\norm{\mu_t I + \nabla A(x_t) + O(\omega_2(r_t))} \\
    \nonumber
    &\stackrel{\cref{eq:damp}}{\leq}& O(\omega_2(r_t)) + \norm{\nabla A(x_t)} \\
    \nonumber
    &\stackrel{\cref{eq:uppermono}}{\leq}& O(\omega_2(r_t)) + L 
    = O(1).
    \end{eqnarray}
      Using \cref{eq:omegas} and the fact that $d_t = O(r_t)$, we have
	  from \cref{eq:subadd} and monotonicity and subadditivity of
	  $\omega_2$ that $\omega_2(d_t) = O(\omega_2(r_t))$.
    Thus, the numerator is dominated by $O(\omega_2(r_t) d_t)$ since $r_t = O(d_t)$ according to \cref{eq:Lipmono}.
    Consequently, the damping term $\omega_2(r_t)$ in the denominator cancels out, yielding:
    \[
        \norm{\bar x_t - x_{t+1} + \xi_t} \le O(d_t).
    \]
    By combining the above inequality with \cref{eq:xi_bound}, and using the definition $\bar{x}_t = P_{\mathcal{S}}(x_t)$ and \eqref{eq:def.dr},  we obtain
    \begin{equation}
        \norm{p_t} \le \norm{\bar x_t - x_{t+1} + \xi_t} + \norm{x_t - \bar x_t} + \norm{\xi_t} = O(d_t) + d_t + o(d_t) = O(d_t).
        \label{eq:pd}
    \end{equation}
We now proceed to obtain an upper bound for $r_{t+1}$ similar to \cref{eq:gtmono} by following the proof of \cref{lemma:ytmono}.
    We know that \cref{eq:trimono,eq:prox1mono} still hold as they do not involve elements changed in this corollary, so they indicate
    \begin{eqnarray*}
    r_{t+1} &\leq& \norm{A(x_t) - A(x_{t+1}) - H_t (x_t - x_{t+1}}) + \hat r_{t} (x_{t+1})\\
    &\stackrel{\eqref{eq:Hbound},\cref{eq:damp}}{\leq} &\norm{A(x_t) - A(x_{t+1}) - \nabla A(x_t)(x_{t+1} - x_t)} + O(\omega_2(r_t)) \norm{p_t} + O(\omega_2(r_t) r_t)\\
    &\stackrel{\cref{eq:unicont}}{\leq} &
    \omega_1(\norm{p_t})\norm{p_t} + O(\omega_2(r_t) (\norm{p_t} + r_t)).
    \end{eqnarray*}
    From \cref{eq:pd,eq:qmetricmono}, we know that $\norm{p_t} =
	O(r_t)$, and from \cref{eq:omegas} and \cref{eq:subadd} resulted
	from subadditivity and monotonicity of $\omega_1$ and $\omega_2$,
	we conclude that $\omega_1(\norm{p_t}) \norm{p_t} =
	O(\omega_2(r_t) r_t)$.
    Therefore, the inequality above simplifies to
    \[
    r_{t+1} \leq O(\omega_2(r_t)r_t) = o(r_t)
    \]
    as $\omega_2$ vanishes at zero.
     Since $d_{t+1} = O(r_{t+1})$ and $r_t = O(d_t)$, the inequality above also shows that
    $d_{t+1} = o(d_t)$, proving the claimed superlinear convergence.
    Convergence of the sequence $\{x_t\}$ then follows from the argument of \cref{thm:iterate} using $\norm{p_t} = O(r_t)$.
\end{proof}
The case of $p$-H\"older continuity corresponds to \cref{cor:cont}
with $\omega_1(t) = \zeta t^p$.
However, for this special case,
\cref{thm:supermono,thm:supermono2} provide more refined results with
explicit rates.

Comparing \cref{cor:cont} with \cref{thm:supermono,thm:supermono2} reveals
an inherent trade-off in the decay rate of the damping term and the
stopping tolerance.
To ensure robustness against the lack of smoothness (small $p$ or
uniform continuity), these parameters must decay slowly.
Conversely, to accommodate a wider range of the error bound exponent,
they must vanish rapidly.
\fi

\section{A Technical Lemma}
\label{app:lemma}
\begin{lemma}
\label{lem:rtbound}
Consider the setting of \cref{lemma:global}.
For all $t \geq 0$, we have
\[
(1 - \nu) r(x_t) \leq (\norm{H_t} + 2) \norm{\tilde p_t}.
\]
\end{lemma}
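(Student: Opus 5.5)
The plan is to compare the true forward-backward residual $R(x_t)$ with the subproblem residual $\hat R_t(\tilde x_{t+1})$. Nonexpansiveness of $\prox_\Psi$ should give a bound in terms of $\norm{\tilde p_t}$. Recall $R(x_t) = x_t - \prox_\Psi(x_t - g_t)$ and, from \cref{eq:rhat},
\[
\hat R_t(\tilde x_{t+1}) = \tilde x_{t+1} - \prox_\Psi\big(\tilde x_{t+1} - g_t - H_t \tilde p_t\big).
\]

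The first step is to subtract these two expressions and apply the triangle inequality:
\[
\norm{R(x_t) - \hat R_t(\tilde x_{t+1})} \le \norm{x_t - \tilde x_{t+1}} + \norm{\prox_\Psi(x_t - g_t) - \prox_\Psi(\tilde x_{t+1} - g_t - H_t\tilde p_t)}.
\]
By nonexpansiveness of $\prox_\Psi$, which holds because $\Psi$ is convex, proper and closed, the second term is at most $\norm{x_t - \tilde x_{t+1} + H_t \tilde p_t} \le (1 + \norm{H_t})\norm{\tilde p_t}$. Hence
\[
r(x_t) \le \hat r_t(\tilde x_{t+1}) + (\norm{H_t} + 2)\norm{\tilde p_t}.
\]

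The second step uses the inexactness condition \cref{eq:stop}, which gives $\hat r_t(\tilde x_{t+1}) \le \nu \min\{r_t^{1+\rho}, r_t\} \le \nu r_t$. Moving $\nu r_t$ to the left-hand side yields $(1-\nu) r_t \le (\norm{H_t} + 2)\norm{\tilde p_t}$, which is the claim.

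The argument is routine. The only point that needs care is using the $\min\{\cdot, r_t\}$ part of \cref{eq:stop}, rather than the $r_t^{1+\rho}$ part, so that the bound holds for all $t$ regardless of whether $r_t$ is small. This is the reason \cref{eq:stop} includes the extra requirement $\nu<1$ and the minimum with $r(x_t)$.
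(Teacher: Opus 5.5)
Your proposal is correct and follows essentially the same route as the paper's proof. The paper likewise compares $R(x_t)$ with $\hat R_t(\tilde x_{t+1})$ via the triangle inequality and nonexpansiveness of $\prox_\Psi$, then absorbs $\hat r_t(\tilde x_{t+1}) \le \nu r(x_t)$ from \cref{eq:stop} into the left-hand side.
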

\begin{proof}
Since  $\nu < 1$ and $\hat r_t(\tilde x_{t+1}) \leq \nu r(x_t)$ in
\cref{eq:stop},
we obtain from \cref{eq:stop,eq:rhat,eq:defR3}, the triangle inequality, and the nonexpansiveness of $\prox_\Psi$ due to convexity of $\Psi$ in the fourth inequality that
\begin{align*}
(1 - \nu) r_t 
\leq &~ r_t - \hat r_t(\tilde x_{t+1})\\ 
\leq &~ \norm{R_t(x_t) - \hat R_t(\tilde x_{t+1})}\\ 
\leq &~ \norm{x_t - \tilde x_{t+1}} + \norm{\prox_{\Psi}(\tilde x_{t+1} - g_t - H_t (\tilde x_{t+1} - x_t)) - \prox_{\Psi}(x_{t} - g_t)}\\ 
\leq &~ \norm{x_t - \tilde x_{t+1}} + \norm{(\tilde x_{t+1} - g_t - H_t (\tilde x_{t+1} - x_t)) - (x_{t} - g_t)}\\ 
= &~ \norm{x_t - \tilde x_{t+1}} + \norm{(\tilde x_{t+1} - x_{t} - H_t (\tilde x_{t+1} - x_t))}\\ 
\leq &~ (2 + \norm{H_t})\norm{\tilde x_{t+1} - x_t} \\ 
= &~ (2 + \norm{H_t})\norm{\tilde p_t},
\end{align*}
proving the stated result.
\end{proof}

\end{document}